\newtheorem{theorem}{Theorem}[section]
\newtheorem{remark}{Remark}[section]
\newtheorem{lemma}{Lemma}[section]
\numberwithin{equation}{section}
\begin{document}

\title{On the Sign Changes of a Weighted Divisor Problem}

\author{Lirui Jia}
\address{School of Mathematical Sciences, Zhejiang University,
Hangzhou 310027, People's Republic of China }

\email{jialirui@126.com}

\author{Tianxin Cai}
\address{School of Mathematical Sciences, Zhejiang University,
Hangzhou 310027, People's Republic of China }

\email{txcai@zju.edu.cn}

\author{Wenguang Zhai}
\address{Department of Mathematics, China University of Mining and Thechnology, Beijing 100083, People's Republic of China }

\email{zhaiwg@hotmail.com}

\thanks{
The first and the second author are supported by the National Natural Science Foundation
of China (Grant No. 11571303). The third author is supported by the National Key Basic Research Program of China (Grant No. 2013CB834201), the National Natural Science Foundation of China (Grant No. 11171344).}
\date{}{}

\subjclass[2010]{11N37, 11P21}
\keywords{Weighted divisor problem, sign
change, Voronoi's formula}

\begin{abstract}
Let $S\big(x; \frac{a_1}{q_1}, \frac{a_2}{q_2}\big)=\mathop{{\sum}'}_{mn\leq x} \cos\big(2\pi m\frac{a_1}{q_1}\big)\sin\big(2\pi n\frac{a_2}{q_2}\big)$ with $x\geq q_1q_2, 1\leq a_i\leq q_i$, and $(a_i, q_i)=1$ ($i=1, 2$). We study the sign changes of $S\big(x; \frac{a_1}{q_1}, \frac{a_2}{q_2}\big)$, and prove that for a  sufficiently large constant $C$, $S\big(x; \frac{a_1}{q_1}, \frac{a_2}{q_2}\big)$  changes sign in the interval $[T,T+C\sqrt{T}]$ for any large $T$. Meanwhile, we show that for a small constant $c'$,  there exist infinitely many
subintervals of length $c'\sqrt{T}\log^{-7}T$ in $[T,2T]$ where $\pm S\big(t; \frac{a_1}{q_1}, \frac{a_2}{q_2}\big)> c_5 (q_1q_2)^\frac{3}{4}t^\frac{1}{4}$ always holds.
\end{abstract}

\maketitle


\section{Introduction}

\subsection{Dirichlet divisor problem}

Let $d(n)$ be the Dirichlet divisor function, and $D(x)=\sum\limits_{n\leq x}d(n)=\sum\limits_{n_1n_2\leq x}1$ denote the summatory function. In 1849, Dirichlet proved that
$$D(x)=x\log x+(2\gamma-1)x+O(\sqrt{x}),$$
where $\gamma$ is the Euler constant.

 Let
$$\Delta(x)=D(x)-x\log x-(2\gamma-1)x$$ be the error term in the asymptotic formula for $D(x)$. Dirichlet's divisor problem consists of determining the smallest $\alpha$, for which $\Delta(x)\ll x^{\alpha+\varepsilon}$ holds for any $\varepsilon>0$. Clearly, Dirichlet's result implies that $\alpha\leq\frac{1}{2}$. Since then, there are many improvements on this estimate.
 The best to-date is given by
Huxley\cite{Huxley03,huxley2005exponential}, reads
\begin{equation}\label{huxley}
\Delta(x)\ll x^\frac{131}{416}\log^\frac{26947}{8320}x.
\end{equation}
It is widely conjectured that α$\alpha=\frac{1}{4}$ is admissible and is the best possible.

Since $\Delta(x)$ exhibits considerable fluctuations, one natural way to study the upper bounds is to consider the moments.

In 1904, Voronoi \cite{voronoi1904fonction} showed that
\begin{equation*}
  \int_1^T\Delta(x)dx=\frac{T}{4}+O(T^{\frac{3}{4}}).
\end{equation*}
Later, in 1922 Cram\'er\cite{cramer1922zwei} proved the mean square formula
\begin{equation*}
   \int_1^T\Delta(x)^2dx=cT^{\frac{3}{2}}+O(T^{\frac{5}{4}+\varepsilon}),\quad\forall~\varepsilon>0,
\end{equation*}
where $c$ is a positive constant. In 1983, Ivic \cite{ivic1983large} used the method of large values to prove that
\begin{equation}\label{ivic}
    \int_1^T|\Delta(x)|^Adx\ll T^{1+\frac{A}{4}+\varepsilon},\quad\forall~\varepsilon>0
\end{equation}
for each fixed $0\leq A\leq\frac{35}{4}$. The range of $A$ can be extended to $\frac{262}{27}$ by the estimate \eqref{huxley}.
In 1992, Tsang\cite{tsang1992higher}  obtained the asymptotic formula
\begin{equation}\label{01}
  \int_1^T\Delta(x)^kdx=c_kT^{1+\frac{k}{4}}+O(T^{1+\frac{k}{4}-\delta_k}),\quad \text{for}\ k=3,4,
\end{equation}
with positive constants $c_3$, $c_4$, and $\delta_3=\frac{1}{14}$, $\delta_4=\frac{1}{23}$. Ivi\'c and Sargos~\cite{ivic2007higher} improved the values $\delta_3$, $\delta_4$ to $\delta'_3=\frac{7}{20}$, $\delta'_4=\frac{1}{12}$, respectively.
Heath-Brown\cite{heath1992distribution} in 1992 proved that for any integer $k<A$, where $A$ satisfies \eqref{ivic}, the limit
$$c_k=\lim_{X\rightarrow\infty}X^{-1-\frac{k}{4}}\int_1^X\Delta(x)^kdx$$
exists.  Then, there followed a series
of investigations on explicit asymptotic formula of the type \eqref{01} for larger values of $k$. In 2004, Zhai \cite{zhai04} established asymptotic formulas for $3\leq k\leq9$.

 At the beginning of the 20th  century, Voronoi\cite{voronoi1904fonction} proved the remarkable exact formula that
  \begin{equation*}
    \Delta(x)=-\frac{2}{\pi}\sqrt{x}\sum_{n=1}^\infty\frac{d(n)}{\sqrt{n}}\big(K_1(4\pi\sqrt{nx})\big)+ \frac{\pi}{2}Y_1(4\pi\sqrt{nx}),
  \end{equation*}
  where $K_1$, $Y_1$ are the Bessel functions, and the series on the right-hand side is boundedly convergent for $x$ lying in each fixed closed interval.

Heath-Brown and Tsang \cite{heathbrown1994sign} studied the sign changes of $\Delta(x)$. They proved that for a suitable constant $C > 0$, $\Delta(x)$ changes sign on the interval $[T,T + C \sqrt{T}]$ for every sufficiently large $T$. Here the length $\sqrt{T}$ is almost best possible since they proved that in the interval $[T,2T]$ there are many subintervals of length $\gg\sqrt{T}\log^{-5}T$ such that $\Delta(x)$ does not change sign in any of these subintervals.

\subsection{A weighted divisor problem}

Recently, Berndt et al\cite{Berndt06, Berndt12} considered a weighted divisor function $\mathop{{\sum}'}_{mn\leq x}\cos(2\pi m\theta_1)\sin(2\pi n\theta_2)$, and got an analogue of Voronoi's formula as follows.

Let $J_1$ be the ordinary Bessel function. If $0<\theta_1, \theta_2<1$ and $x>0$,  then
\begin{align*}
   &\mathop{{\sum}'}\limits_{mn\leq x}\cos(2\pi m\theta_1)\sin(2\pi n\theta_2) \\
   =&  -\frac{\cot(\pi\theta_2)}{4}+\frac{\sqrt{x}}{4}\sum_{m=0}^\infty\sum_{n=0}^\infty \Bigg\{\frac{J_1\Big(4\pi\sqrt{(m+\theta_1)(n+\theta_2)x}\Big)}{\sqrt{(m+\theta_1)(n+\theta_2)}}\\
   &\!+\frac{J_1\Big(4\pi\sqrt{(m+1-\theta_1)(n+\theta_2)x}\Big)}{\sqrt{(m+1-\theta_1)(n+\theta_2)}} -\frac{J_1\Big(4\pi\sqrt{(m+\theta_1)(n+1-\theta_2)x}\Big)}{\sqrt{(m+\theta_1)(n+1-\theta_2)}}\\ &-\frac{J_1\Big(4\pi\sqrt{(m+1-\theta_1)(n+1-\theta_2)x}\Big)}{\sqrt{(m+1-\theta_1)(n+1-\theta_2)}}\Bigg\}.
\end{align*}

Denote
$$S\big(x; \frac{a_1}{q_1}, \frac{a_2}{q_2}\big)=\mathop{{\sum}'}_{mn\leq x} \cos\Big(2\pi m\frac{a_1}{q_1}\Big)\sin\Big(2\pi n\frac{a_2}{q_2}\Big).$$
 In \cite{jia2016weighted}, we got for $x\geq q_1q_2, 1\leq a_i\leq q_i$, $(a_i, q_i)=1$ ($i=1, 2$) that
\begin{equation*}
  S\big(q_1q_2x; \frac{a_1}{q_1}, \frac{a_2}{q_2}\big)\ll q_1q_2x^\frac{131}{416}\left(\log x\right)^\frac{26947}{8320},
\end{equation*}
\begin{equation}\label{s1}
   \int_1^T{S\big(q_1q_2x; \frac{a_1}{q_1}, \frac{a_2}{q_2}\big)}dx\ll q_1q_2T^{\frac{3}{4}}.
\end{equation}
If $T\gg (q_1q_2)^{\varepsilon}$ is large enough, then for $2\leq k\leq9$ we proved
\begin{equation}\label{sk}
  \int_1^{T}{S^k\big(q_1q_2x; \frac{a_1}{q_1}, \frac{a_2}{q_2}\big)}dx
  =(q_1q_2)^kC_k\int_1^Tx^{\frac{k}{4}}dx+o\big((q_1q_2)^{k} T^{1+\frac{k}{4}}\big),
\end{equation}
where $C_k$ are  explicit constants.

Here we study $S\big(x; \frac{a_1}{q_1}, \frac{a_2}{q_2}\big)$ further and give some more estimates about it.

\textsc{Notations}. For a real number $t$, let $[t]$ be the largest integer no greater than $t$, $\{t\}=t-[t]$, $\psi(t)=\{t\}-\frac{1}{2}$, $\parallel t\parallel=\min(\{t\}$, $1-\{t\})$, $e(t)=e^{2\pi it}$. $\mathbb{C}$, $\mathbb{R}$, $\mathbb{Z}$, $\mathbb{N}$ denote the set of complex numbers, of real numbers, of integers, and of natural numbers, respectively; $f\asymp g$ means that both $f\ll g$ and $f\gg g$ hold. Throughout this paper, $\varepsilon$ denote sufficiently small positive constants, and $\mathcal{L}$ denotes $\log T$.

\vspace{1ex}

\section{Main results}\label{sec:results}

In this paper, we will discuss the sign changes of $S\big(x; \frac{a_1}{q_1}, \frac{a_2}{q_2}\big)$ and get the following
\begin{theorem}\label{th:change}
 Let $c_1 > 0$ be a sufficiently small constant and $c_2 > 0$ be a sufficiently
large constant, $q_1\geq 2$, $q_2\geq 3$, $1\leq a_i\leq q_i$ and $(a_i, q_i)=1$ $(i=1, 2)$. For any real-valued function $|f(t)|\leq c_1 t^\frac{1}{4}$, the
function $S\big(t; \frac{a_1}{q_1}, \frac{a_2}{q_2}\big)+f(t)$  changes sign at least once in the interval $[T,T + c_2 \sqrt{q_1q_2T}]$ for every sufficiently large $T\geq (q_1q_2)^{1+\varepsilon}$. In particular, there exist $t_1$, $t_2\in [T,T +c_2 \sqrt{q_1q_2T}]$ such that $S\big(t_1; \frac{a_1}{q_1}, \frac{a_2}{q_2}\big)\geq c_1 t_1^\frac{1}{4}$ and $S\big(t_2; \frac{a_1}{q_1}, \frac{a_2}{q_2}\big)\leq -c_1 t_2^\frac{1}{4}$.
\end{theorem}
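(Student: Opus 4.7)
My plan is to argue by contradiction in the spirit of Heath-Brown and Tsang's treatment of $\Delta(x)$. Fix $H:=c_2\sqrt{q_1q_2 T}$ with $c_2$ a large constant to be chosen, and set $g(t):=S\big(t;\tfrac{a_1}{q_1},\tfrac{a_2}{q_2}\big)+f(t)$. Suppose, for contradiction, that $g$ has constant sign throughout $[T,T+H]$; without loss of generality $g(t)\ge 0$. The aim is to produce a lower bound on $\int_T^{T+H} g(t)\,dt$ that strictly exceeds the upper bound coming from \eqref{s1}.

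The main analytic inputs are short-interval analogues of \eqref{sk} for $k=2$ and $k=4$, namely
\[
\int_T^{T+H} S^2\!\Big(t;\tfrac{a_1}{q_1},\tfrac{a_2}{q_2}\Big)\,dt \;\gg\; (q_1q_2)^{3/2}HT^{1/2}, \qquad \int_T^{T+H} S^4\!\Big(t;\tfrac{a_1}{q_1},\tfrac{a_2}{q_2}\Big)\,dt \;\ll\; (q_1q_2)^{3}HT.
\]
These I would prove by inserting the Berndt--Kim--Zaharescu Voronoi-type formula into $\int S^k\,dt$, truncating the double series at a suitable level, isolating the diagonal (which yields a main term of size $\asymp(q_1q_2)^{k/2}HT^{k/4}$, consistent with \eqref{sk}), and bounding the off-diagonal oscillatory sums by standard exponential-sum techniques. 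The hypothesis $T\geq(q_1q_2)^{1+\varepsilon}$ enters here to let the truncation parameters be set so that the off-diagonal remains dominated by the diagonal with the right $q_1,q_2$-dependence. Combining with the elementary inequalities $(a+b)^2\ge\tfrac{1}{2}a^2-b^2$ and $(a+b)^4\ll a^4+b^4$ together with $|f(t)|\le c_1 t^{1/4}$, these bounds transfer to $g$ in place of $S$ provided $c_1$ is a sufficiently small absolute constant.

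Applying Hölder's inequality in the form $(\int g^2)^3\le(\int g)^2(\int g^4)$ then gives
\[
\int_T^{T+H} g(t)\,dt \;\gg\; \frac{\bigl((q_1q_2)^{3/2}HT^{1/2}\bigr)^{3/2}}{\bigl((q_1q_2)^{3}HT\bigr)^{1/2}} \;=\; H(q_1q_2)^{3/4}T^{1/4} \;\asymp\; c_2(q_1q_2)^{5/4}T^{3/4}.
\]
On the other hand, the substitution $u=q_1q_2 x$ applied to \eqref{s1} yields $\bigl|\int_1^X S\big(u;\tfrac{a_1}{q_1},\tfrac{a_2}{q_2}\big)\,du\bigr|\ll(q_1q_2)^{5/4}X^{3/4}$, whence by the triangle inequality
\[
\Big|\int_T^{T+H} g(t)\,dt\Big| \;\le\; \Big|\int_T^{T+H} S\big(t;\tfrac{a_1}{q_1},\tfrac{a_2}{q_2}\big)\,dt\Big| + c_1 HT^{1/4} \;\ll\; (q_1q_2)^{5/4}T^{3/4} + c_1 c_2(q_1q_2)^{1/2}T^{3/4}.
\]
Taking $c_2$ sufficiently large and $c_1$ sufficiently small, depending only on absolute constants (note $q_1q_2\ge 6$), the Hölder lower bound strictly exceeds this upper bound, contradicting $g\ge 0$. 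Hence $S+f$ must change sign in $[T,T+H]$. The "in particular" statement then follows by specialisation: applying the sign-change conclusion to $f(t)=-c_1 t^{1/4}$ forces some $t_1\in[T,T+H]$ with $S\big(t_1;\tfrac{a_1}{q_1},\tfrac{a_2}{q_2}\big)\ge c_1 t_1^{1/4}$, and applying it to $f(t)=c_1 t^{1/4}$ produces the corresponding $t_2$.

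The principal obstacle is the first step: establishing the short-interval moment bounds with sharp $q_1,q_2$-dependence. The long-interval asymptotic \eqref{sk} cannot be used directly, because naively writing $\int_T^{T+H}=\int_1^{T+H}-\int_1^T$ leaves an error of size $o((q_1q_2)^k T^{1+k/4})$ which overwhelms the short-interval main term $(q_1q_2)^{k/2}HT^{k/4}$ as soon as $H=o(T)$. A dedicated Jutila-type argument is therefore required: truncate the Voronoi expansion of $S$ at a level $N\asymp T/H^2$, evaluate the diagonal explicitly, and dispose of the off-diagonal oscillatory double sum via a weighted Parseval-type estimate, tracking the $q_1,q_2$-dependence at every step.
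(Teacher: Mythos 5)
Your architecture (assume constant sign on $[T,T+H]$, lower-bound $\int_T^{T+H}|g|$ via H\"older from the second and fourth moments, contradict the first-moment bound \eqref{s1}) is the standard one for sign changes in windows of length $T^{\frac{1}{2}+\varepsilon}$, but it cannot reach the critical length $H=c_2\sqrt{q_1q_2T}$ of the theorem, and the gap sits exactly where you flag and then defer it. You need $\int_T^{T+H}S^2\gg(q_1q_2)^{\frac{3}{2}}HT^{\frac{1}{2}}$ for \emph{every} window of this length. In the truncated Voronoi expansion the diagonal does give this main term, but the near-diagonal pairs $m\neq n$ with $0<|\sqrt m-\sqrt n|\leq\sqrt T/H$ are out of control: the first derivative test gives only $\min\bigl(HT^{\frac{1}{2}},\,T|\sqrt m-\sqrt n|^{-1}\bigr)$ per pair, and summing this over the near-diagonal yields a quantity comparable to, or exceeding by powers of $\log T$, the diagonal itself once $H\asymp\sqrt T$ --- no matter how large the absolute constant $c_2$ is, since gaining from $c_2$ would require $c_2\gg\log^{3}T$. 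This is precisely why the Jutila-type short-interval mean square requires $H\geq T^{\frac{1}{2}+\varepsilon}$, and why a lower bound at scale exactly $\sqrt T$ is not known to hold (for $\Delta(x)$ either); note that Theorem \ref{th:maintain} shows the window length here is within a factor $\log^{7}T$ of optimal, so you are working at the breaking point of the moment method. The fourth-moment upper bound over a single window of length $\sqrt T$ has the same defect: \eqref{sk} only gives $\ll(q_1q_2)^{3}T^{2}$ there, which is larger than your claimed $(q_1q_2)^{3}HT$ by a factor $T^{\frac{1}{2}}$.

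The paper avoids moments entirely and follows the Heath-Brown--Tsang resonance-kernel method. After substituting $x=q_1q_2t^2$ one sets $S^*(t)=4\sqrt2\pi(q_1q_2)^{-1}t^{-\frac{1}{2}}\bigl(S(q_1q_2t^2;\frac{a_1}{q_1},\frac{a_2}{q_2})+f(q_1q_2t^2)\bigr)$ and convolves against the nonnegative kernel $K_\zeta(u)=(1-|u|)\bigl(1+\zeta\sin(4\pi\alpha\sqrt{n_0}u)\bigr)$, where $n_0<\frac{1}{4}q_1q_2$ is the smallest frequency with $\Delta d_2(n_0;a_1,q_1,a_2,q_2)\neq0$ and that coefficient is $\pm1$. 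Lemma \ref{lem:change} shows the convolution equals $\frac{\zeta'}{2n_0^{3/4}}\sin\bigl(4\pi t\sqrt{n_0}-\frac{3\pi}{4}\bigr)+O\bigl(\alpha^{-2}+t^{-\frac{1}{2}}\mathcal{L}^3+c_1(q_1q_2)^{-\frac{3}{4}}\bigr)$: the kernel resonates with the single term $n=n_0$ and annihilates the rest of the series, producing a main term of size $\gg(q_1q_2)^{-\frac{3}{4}}$ whose sign can be prescribed by choosing $\zeta$ and $t$ within one period $\asymp n_0^{-\frac{1}{2}}$. Since $K_\zeta\geq0$, a weighted average of $S^*$ with prescribed sign forces $S^*$ to take that sign somewhere in $[t-\alpha,t+\alpha]$, which translates back to a window of length $\asymp\alpha\sqrt{q_1q_2T}$ in $x$. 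To salvage your route you would either have to prove the second-moment lower bound at scale exactly $\sqrt T$, or settle for the weaker interval length $T^{\frac{1}{2}+\varepsilon}$; the kernel method is what buys the sharp constant-times-$\sqrt{q_1q_2T}$ length. Your derivation of the ``in particular'' clause from the general-$f$ statement is correct and matches the paper's.
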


\begin{theorem}\label{th:maintain}
There exist positive absolute constants $c_3$ ,$c_4$ ,$c_5 $ such that, for
any large parameter $T\geq (q_1q_2)^{1+\varepsilon}$, there are at least $c_3 \sqrt{T} log^{7}T$ disjoint subintervals of length $c_4 \sqrt{T} log^{-7}T$ in $[T,2T]$, such that $\pm S\big(t; \frac{a_1}{q_1}, \frac{a_2}{q_2}\big)> c_5 (q_1q_2)^\frac{3}{4}t^\frac{1}{4}$, whenever $t$ lies in any of these
subintervals. Moreover, we have the estimate
\begin{equation*}
    meas\big\{t\in[T,2T]:\pm S\big(t; \frac{a_1}{q_1}, \frac{a_2}{q_2}\big)> c_5 (q_1q_2)^\frac{3}{4}t^\frac{1}{4}\big\}\gg T.
\end{equation*}
\end{theorem}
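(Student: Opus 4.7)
The plan follows the approach of Heath-Brown and Tsang \cite{heathbrown1994sign} for the classical $\Delta(x)$, adapted to the weighted setting. It splits into two largely independent parts: the measure estimate, which uses only the moment bounds \eqref{s1} and \eqref{sk}, and the count of disjoint subintervals, which additionally requires a truncated Voronoi approximation.

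For the measure estimate, set $E_c = \{t \in [T,2T] : |S(t; a_1/q_1, a_2/q_2)| > c(q_1q_2)^{3/4} t^{1/4}\}$. By \eqref{sk} with $k = 2$, $\int_T^{2T} S^2\,dt \asymp (q_1q_2)^{3/2} T^{3/2}$, while the pointwise definition gives $\int_{[T,2T]\setminus E_c} S^2 \leq c^2 (q_1q_2)^{3/2}\int_T^{2T} t^{1/2}\,dt \ll c^2 (q_1q_2)^{3/2} T^{3/2}$. Taking $c = c_5$ small enough, $\int_{E_c} S^2 \gg (q_1q_2)^{3/2} T^{3/2}$. Cauchy--Schwarz combined with \eqref{sk} for $k = 4$ yields
$$(q_1q_2)^{3/2} T^{3/2} \ll \int_{E_c} S^2 \leq \Big(\int_T^{2T} S^4\Big)^{1/2} |E_c|^{1/2} \ll (q_1q_2)^{3/2} T |E_c|^{1/2},$$
hence $|E_c| \gg T$. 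For the separation by sign, \eqref{s1} gives $\int_T^{2T} S\,dt \ll (q_1q_2) T^{3/4} = o((q_1q_2)^{3/4} T^{5/4})$; coupling this $L^1$ cancellation with the $L^2$ and $L^4$ bounds forces each of $\{t : \pm S(t) > c_5 (q_1q_2)^{3/4} t^{1/4}\}$ to have measure $\gg T$.

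For the subinterval count, apply the Voronoi formula of Berndt et al.\ together with $J_1(z) = \sqrt{2/(\pi z)}\cos(z - 3\pi/4) + O(z^{-3/2})$ to decompose $S(t) = S_N^*(t) + E_N(t)$, where $S_N^*$ collects the oscillatory terms with $r := (m+\theta_1^{\pm})(n+\theta_2^{\pm}) \leq N$ (summed over the four sign combinations), with coefficients of size $(q_1q_2)^{3/4} t^{1/4} r^{-3/4}$. Mean value estimates for the tail, parallel to those underlying \eqref{sk}, give
$$\int_T^{2T} E_N(t)^2\,dt \ll \frac{(q_1q_2)^{3/2} T^{3/2}}{N^{1/2}} + (q_1q_2)^C T^{1+\varepsilon},$$
and termwise differentiation yields $|(S_N^*)'(t)| \ll (q_1q_2)^{3/4} t^{-1/4} N^{3/4} \log N$. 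Choosing $N = \eta \log^8 T$ with $\eta > 0$ small, we obtain $\int E_N^2 \ll (q_1q_2)^{3/2} T^{3/2} \log^{-4} T$, and for $h = c_4 \sqrt{T}\log^{-7} T$ with $c_4$ small, $|S_N^*(t+\tau) - S_N^*(t)| \leq (c_5/4)(q_1q_2)^{3/4} t^{1/4}$ uniformly for $|\tau| \leq h$.

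Applying the first-part moment argument to $S_N^* = S - E_N$ (the $L^2$ tail bound absorbing $E_N$) produces $|F| \gg T$ where $F = \{t : \pm S_N^*(t) > 2c_5 (q_1q_2)^{3/4} t^{1/4}\}$ for each sign. Partition $[T,2T]$ into disjoint intervals $I_j$ of length $h$; at least $\gg T/h = \sqrt{T}\log^7 T$ of them meet $F$, and on each the derivative bound propagates the pointwise largeness of $\pm S_N^*$ throughout $I_j$. Uniform control of $|E_N|$ on $I_j$ is obtained by a dyadic splitting of $E_N$ into frequency shells $r \in [R, 2R]$ with $N < R \leq T^A$ and a remainder with $r > T^A$: for each shell combine the shell's mean-square bound (of order $(q_1q_2)^{3/2} T^{3/2}\log R/R^{1/2}$ on $[T,2T]$) with the shell's derivative bound via Gallagher's lemma, and bound the final remainder in $L^\infty$ by $T^{-B}$ using the absolute convergence of the Voronoi series for $A$ sufficiently large. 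A Markov-type averaging then excludes only $o(\sqrt{T}\log^7 T)$ intervals, leaving $\gg \sqrt{T}\log^7 T$ on which $\pm S(t) = \pm S_N^*(t) \pm E_N(t) > c_5 (q_1q_2)^{3/4} t^{1/4}$ throughout. The main obstacle will be the localized sup-control of $E_N$ on intervals of length $h$: the two-dimensional structure of the Voronoi series, with pairs $(m,n)$ and frequencies $(q_1q_2)^{-1/2}\sqrt{(q_1 m + a_1)(q_2 n + a_2)}$, necessitates a mean value theorem for exponential sums that carefully tracks the clustering of these twisted frequencies, more intricate than the one-dimensional analysis in \cite{heathbrown1994sign}.
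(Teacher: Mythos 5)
Your measure estimate is sound and is essentially the paper's own argument (Lemma \ref{lem:3}): the $k=2,4$ cases of \eqref{sk} give $\int_T^{2T}|S|\,dt\gg (q_1q_2)^{3/4}T^{5/4}$, the first-moment bound \eqref{s1} separates the two signs, and Cauchy--Schwarz against the fourth moment gives measure $\gg T$. The gap is in the subinterval count, and it sits exactly where you place it yourself: the localized sup-control of $E_N$ on intervals of length $h=c_4\sqrt T\log^{-7}T$ is asserted, not proved, and as sketched it does not close. Three concrete problems. First, the usable truncated Voronoi formula here (formula \eqref{vor0}) is not ``oscillatory main term plus small tail'': besides the secondary sums $R_{12},R_{21}$ it contains the non-oscillatory pieces $G_{12},G_{21}$ built from $\min\bigl(1,1/(H\|\cdot\|)\bigr)$, which are controlled only in mean square over $[T,2T]$ (Lemmas 6.2 and 6.5 of \cite{jia2016weighted}); termwise differentiation and Gallagher's lemma do not apply to them, and mean-square smallness of a function does not prevent the set where it is large from meeting every one of your $\asymp\sqrt T\log^7T$ intervals $I_j$. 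Second, even for the purely oscillatory tail, the Gallagher/Sobolev bound $\sum_j\sup_{I_j}E_N^2\ll h^{-1}\int E_N^2+\int|E_N||E_N'|$ with $N=\eta\log^8T$ comes out at the \emph{same} order as $(T/h)\cdot(q_1q_2)^{3/2}T^{1/2}$, with an unfavourable factor $\eta^{-1/2}$ from $\sum_{n>N}d^2(n)n^{-3/2}$; so the Markov step would discard a positive proportion of the intervals rather than $o(\sqrt T\log^7T)$ of them. One must move $N$ into the narrow window $\log^8T\ll N\ll\log^{28/3}T$ and track every power of $\log$, which you have not done. Third, the ``clustering of twisted frequencies'' you name as the main obstacle is in fact not one: since $\theta_i=a_i/q_i$, the substitution $x\mapsto q_1q_2x$ turns all frequencies into $\sqrt{nx}$ with \emph{integer} $n$ and coefficients $\Delta d_2(n;a_1,q_1,a_2,q_2)$; this reduction is the starting point of the paper and should have been yours.

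The paper's proof avoids sup-norms altogether and is worth contrasting. Lemma \ref{lem:meanvalue} bounds the short-interval second moment $\int_1^T\bigl(S(q_1q_2(x+h))-S(q_1q_2x)\bigr)^2dx\ll (q_1q_2)^2hT\log^3(\sqrt T/h)+(q_1q_2)^2T\mathcal L^6$ (this is where the diagonal/off-diagonal analysis of the truncated series actually happens, in mean square only); a dyadic subdivision of $h$ upgrades it to the maximal bound of Lemma \ref{lem:4}, $\int_T^{2T}\max_{h\le H_0}\bigl(S_\pm(q_1q_2(t+h))-S_\pm(q_1q_2t)\bigr)^2dt\ll(q_1q_2)^2H_0T\mathcal L^7$; and the elementary observation that $\omega(t)=P^2(t)-4\max_{h\le H_0}(P(t+h)-P(t))^2-Q^2(t)>0$ forces $P$ to keep its sign and stay above $\tfrac12|Q(t)|$ throughout $[t,t+H_0]$ then yields, with $P=S_\pm$, $Q=\delta q_1q_2t^{1/4}$ and $H_0=\delta\sqrt T\mathcal L^{-7}$, both the interval count and the measure bound in one stroke from $\int_T^{2T}\omega\gg(q_1q_2)^2T^{3/2}$. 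If you wish to salvage your route, you must first pass to \eqref{vor0} and then actually prove the missing sup-norm lemma for $E_N$ (including the $G$-terms); otherwise, replace the whole second half by the second-moment-of-differences mechanism.
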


We also study the $\Omega$-result of the error term in the asymptotic formula \eqref{sk} for odd $k$ by using Theorem \ref{th:maintain}. Define
\begin{equation*}
    \mathcal{F}_k\big(q_1q_2x; \frac{a_1}{q_1}, \frac{a_2}{q_2}\big):=(q_1q_2)^{-k}\int_1^{T}{S^k\big(q_1q_2x; \frac{a_1}{q_1}, \frac{a_2}{q_2}\big)}dx
  -C_kT^{1+\frac{k}{4}}.
\end{equation*}
We have the following
\begin{theorem}\label{th:omega}
The estimate
\begin{equation*}
    \mathcal{F}_k\big(q_1q_2T; \frac{a_1}{q_1}, \frac{a_2}{q_2}\big)=\Omega\big(T^{\frac{1}{2}+\frac{k}{4}}\mathcal{L}^{-7}\big)
\end{equation*}
holds for any fixed odd integer $k\geq3$ and every sufficiently large $T\geq (q_1q_2)^{\varepsilon}$.
\end{theorem}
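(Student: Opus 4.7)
The plan is to produce, inside $[T,2T]$, two points $T_1^{\epsilon}<T_2^{\epsilon}$ at which the identity
\[
\mathcal{F}_k(T_2)-\mathcal{F}_k(T_1)=(q_1q_2)^{-k}\!\int_{T_1}^{T_2}S^k\bigl(q_1q_2x;\tfrac{a_1}{q_1},\tfrac{a_2}{q_2}\bigr)dx-C_k\bigl(T_2^{1+k/4}-T_1^{1+k/4}\bigr)
\]
has both right-hand terms carrying the same sign, and then to invoke the triangle inequality. The location $[T_1^{\epsilon},T_2^{\epsilon}]$ will be chosen via Theorem~\ref{th:maintain} so that $S$ has a prescribed sign on this interval; since $k$ is odd, $S^k$ inherits that sign, giving the freedom to align the two contributions with the main-term increment $-C_k(T_2^{1+k/4}-T_1^{1+k/4})$, whose sign is fixed by $\operatorname{sgn}(C_k)$.

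First I would apply Theorem~\ref{th:maintain} with parameter $T^{\ast}:=q_1q_2T$, which is admissible because $T\geq(q_1q_2)^{\varepsilon}$ forces $T^{\ast}\geq(q_1q_2)^{1+\varepsilon}$. For each sign $\pm$ this yields a subinterval $I^{\pm}=[\tau^{\pm},\tau^{\pm}+L]\subset[T^{\ast},2T^{\ast}]$ of common length $L=c_4\sqrt{T^{\ast}}\log^{-7}T^{\ast}$ on which $\pm S(t;\tfrac{a_1}{q_1},\tfrac{a_2}{q_2})>c_5(q_1q_2)^{3/4}t^{1/4}$. Setting $T_1^{\pm}:=\tau^{\pm}/(q_1q_2)$, $T_2^{\pm}:=(\tau^{\pm}+L)/(q_1q_2)$ and using $\log T^{\ast}\asymp\mathcal{L}$ (valid since $q_1q_2\leq T^{1/\varepsilon}$), I get $[T_1^{\pm},T_2^{\pm}]\subset[T,2T]$ of length $\asymp(q_1q_2)^{-1/2}T^{1/2}\mathcal{L}^{-7}$. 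The substitution $t=q_1q_2x$, the pointwise lower bound on $|S|$, and the odd parity of $k$ combine to give
\[
\pm\,(q_1q_2)^{-k}\!\int_{T_1^{\pm}}^{T_2^{\pm}}S^k\bigl(q_1q_2x;\tfrac{a_1}{q_1},\tfrac{a_2}{q_2}\bigr)dx\;\gg\;(q_1q_2)^{-1/2}T^{1/2+k/4}\mathcal{L}^{-7}.
\]
The mean value theorem shows $|C_k((T_2^{\pm})^{1+k/4}-(T_1^{\pm})^{1+k/4})|\ll(q_1q_2)^{-1/2}T^{1/2+k/4}\mathcal{L}^{-7}$, of the \emph{same} order. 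This near-match in sizes is the main obstacle, because on either single subinterval the two right-hand terms could, a priori, cancel down to something much smaller.

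The key step is to remove the obstacle by choosing $\epsilon\in\{+,-\}$ with $\epsilon=-\operatorname{sgn}(C_k)$ (either choice if $C_k=0$). Then $\epsilon\cdot(q_1q_2)^{-k}\int_{T_1^{\epsilon}}^{T_2^{\epsilon}}S^k\,dx>0$ by the displayed inequality, and $-\epsilon C_k\bigl((T_2^{\epsilon})^{1+k/4}-(T_1^{\epsilon})^{1+k/4}\bigr)\geq 0$ by the choice of $\epsilon$; the two contributions reinforce rather than cancel, so
\[
|\mathcal{F}_k(T_2^{\epsilon})-\mathcal{F}_k(T_1^{\epsilon})|\;\geq\;\Bigl|(q_1q_2)^{-k}\!\int_{T_1^{\epsilon}}^{T_2^{\epsilon}}S^k\,dx\Bigr|\;\gg\;T^{1/2+k/4}\mathcal{L}^{-7},
\]
with implied constant allowed to depend on $q_1,q_2$. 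The triangle inequality then gives $\max(|\mathcal{F}_k(T_1^{\epsilon})|,|\mathcal{F}_k(T_2^{\epsilon})|)\gg T^{1/2+k/4}\mathcal{L}^{-7}$ for every sufficiently large $T$, and since $T_j^{\epsilon}\in[T,2T]$, the claimed $\Omega$-estimate follows. Odd $k$ enters only in making $\int S^k\,dx$ sign-switchable via the choice of $I^{+}$ versus $I^{-}$; without this, the sign-matching step, which is the crux of the argument, could not be carried out.
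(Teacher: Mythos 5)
Your proposal is correct and follows essentially the same route as the paper: your sign choice $\epsilon=-\operatorname{sgn}(C_k)$ is exactly the paper's $\delta$, and the core step---using Theorem~\ref{th:maintain} to find a subinterval of length $\asymp\sqrt{T}\mathcal{L}^{-7}$ where $S^k$ has the sign that reinforces, rather than cancels, the main-term increment $-C_k(T_2^{1+k/4}-T_1^{1+k/4})$, then bounding the increment of $\mathcal{F}_k$ from below---is the paper's argument verbatim. If anything you are slightly more careful than the paper in tracking the $(q_1q_2)^{-1/2}$ factor coming from rescaling the subinterval length before absorbing it into the implied constant.
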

\begin{remark}
Although at the present moment we can only prove \eqref{sk} for $2\leq k\leq 9$, Theorem \ref{th:omega} holds for any odd $k\geq 2$.
\end{remark}
\begin{remark} We can get the same or similar conclusions with all presented here for $\mathop{{\sum}'}_{mn\leq x}{\sin(2\pi n\frac{a_1}{q_1})\sin(2\pi m\frac{a_2}{q_2})}$ and $\mathop{{\sum}'}_{mn\leq x}{\cos(2\pi n\frac{a_1}{q_1})\cos(2\pi m\frac{a_2}{q_2})}$ with the same approach.
\end{remark}
\vspace{1.5ex}

\section{The Voronoi-type formula for $S\big(x; \frac{a_1}{q_1}, \frac{a_2}{q_2}\big)$}

In \cite{jia2016weighted},  we proved an analogue of Voronoi's formula for $S\big(q_1q_2x; \frac{a_1}{q_1}, \frac{a_2}{q_2}\big)$.

Denote
\begin{align*}
\Delta d_2(n;a_1, q_1, a_2, q_2)
=&d(n;  a_1, q_1, a_2, q_2)+d(n; - a_1, q_1, a_2, q_2)\\
&-d(n;  a_1, q_1, -a_2, q_2)-d(n; - a_1, q_1, a_2, q_2),
\end{align*}
\begin{align*}
&\Delta d_{2,1}(n,H,J;a_1, q_1, a_2, q_2)\\
=&\!\!\!\!\mathop{{\sum}'}_{\begin{subarray}{c}n=hl\\1\leq h\leq H\\h\leq l\leq2^{J+1}h\\ h\equiv a_2\!\!\!\!\!\pmod{q_2}\\l\equiv  a_1\!\!\!\!\!\pmod{q_1}\end{subarray}}\!\!\!1+\!\!\!\!\mathop{{\sum}'}_{\begin{subarray}{c}n=hl\\1\leq h\leq H\\h\leq l\leq2^{J+1}h\\ h\equiv a_2\!\!\!\!\!\pmod{q_2}\\l\equiv - a_1\!\!\!\!\!\pmod{q_1}\end{subarray}}\!\!\!1-\!\!\!\mathop{{\sum}'}_{\begin{subarray}{c}n=hl\\1\leq h\leq H\\h\leq l\leq2^{J+1}h\\ h\equiv -a_2\!\!\!\!\!\pmod{q_2}\\l\equiv a_1\!\!\!\!\!\pmod{q_1}\end{subarray}}\!\!\!1-\!\!\!\mathop{{\sum}'}_{\begin{subarray}{c}n=hl\\1\leq h\leq H\\h\leq l\leq2^{J+1}h\\ h\equiv -a_2\!\!\!\!\!\pmod{q_2}\\l\equiv - a_1\!\!\!\!\!\pmod{q_1}\end{subarray}}\!\!\!1,
\end{align*}
\begin{align*}
&\Delta d_{2,2}(n,H,J;a_1, q_1, a_2, q_2)\\
=&\!\!\!\!\mathop{{\sum}'}_{\begin{subarray}{c}n=hl\\1\leq h\leq H\\h\leq l\leq2^{J+1}h\\ h\equiv a_2\!\!\!\!\!\pmod{q_2}\\l\equiv  a_1\!\!\!\!\!\pmod{q_1}\end{subarray}}\!\!\!1+\!\!\!\!\mathop{{\sum}'}_{\begin{subarray}{c}n=hl\\1\leq h\leq H\\h\leq l\leq2^{J+1}h\\ l\equiv a_2\!\!\!\!\!\pmod{q_2}\\h\equiv - a_1\!\!\!\!\!\pmod{q_1}\end{subarray}}\!\!\!1-\!\!\!\mathop{{\sum}'}_{\begin{subarray}{c}n=hl\\1\leq h\leq H\\h\leq l\leq2^{J+1}h\\ l\equiv -a_2\!\!\!\!\!\pmod{q_2}\\h\equiv a_1\!\!\!\!\!\pmod{q_1}\end{subarray}}\!\!\!1-\!\!\!\mathop{{\sum}'}_{\begin{subarray}{c}n=hl\\1\leq h\leq H\\h\leq l\leq2^{J+1}h\\ l\equiv -a_2\!\!\!\!\!\pmod{q_2}\\h\equiv - a_1\!\!\!\!\!\pmod{q_1}\end{subarray}}\!\!\!1.
\end{align*}

Let $J=[\frac{\mathcal{L}+2\log q_1q_2-4\log \mathcal{L}}{\log2}]$, $H\geq2$ be a parameter to be determined, and $ T^\varepsilon<y\leq \min(H^2, (q_1q_2)^2 T)\mathcal{L}^{-4}$. Suppose $T\leq x\leq 2T$. Then
\begin{align}\label{vor0}
   S\big(q_1q_2x; \frac{a_1}{q_1}, \frac{a_2}{q_2}\big)
=&R_0(x; y)\!+\!R_{12}(x; y, H)\!+\!R_{21}(x; y, H)\\
\nonumber&+\!G_{12}(x; H)\!+\!G_{21}(x; H)
  \!+\!O(q_1q_2\mathcal{L}^3),
\end{align}
where
 \begin{gather}
    \label{R_0}
    R_0(x; y)=\frac{q_1q_2x^\frac{1}{4}}{4\sqrt{2}\pi}\sum_{n\leq y}\frac{\cos\left(4\pi\sqrt{nx}-\frac{3\pi}{4}\right)}{n^{\frac{3}{4}}}\Delta d_2(n;a_1, q_1, a_2, q_2),\\
    \label{16}
    \nonumber R_{12}(x; y, H)\!=\!\frac{q_1q_2x^\frac{1}{4}}{4\sqrt{2}\pi}\!\!\sum_{y<n\leq2^{J+1}H^2 }\!\!\frac{\cos\big(4\pi\sqrt{nx}\!-\!\frac{3\pi}{4}\big)}{n^{\frac{3}{4}}}\Delta d_{2,1}(n,H,J;a_1, q_1, a_2, q_2),\\
  \nonumber R_{21}(x; y, H)\!= \!\frac{q_1q_2x^\frac{1}{4}}{4\sqrt{2}\pi}\!\!\sum_{y<n\leq 2^{J+1}H^2 }\!\!\frac{\cos\big(4\pi\sqrt{nx}\!-\!\frac{3\pi}{4}\big)}{n^{\frac{3}{4}}}\Delta d_{2,2}(n,H,J;a_1, q_1, a_2, q_2),\\
    \nonumber  G_{12}(x; H) = O\Bigl(q_2\sum_{\begin{subarray}{c}  n_1\leq q_1\sqrt{T}\end{subarray}}\min\Bigl(1, \frac{1}{H\|\frac{q_1x}{n_1}-\frac{r_2}{q_2}\|}\Bigr)\Bigr),\label{G12}\\
 \nonumber  G_{21}(x; H)=O\Big(q_1\sum_{\begin{subarray}{c}  n_2\leq q_2\sqrt{T}\end{subarray}}\min\Big(1, \frac{1}{H\|\frac{q_2x}{n_2}-\frac{r_1}{q_1}\|}\Big)\Big).
   \end{gather}
\section{Proof of Theorem \ref{th:change}}

In this section, we prove Theorem \ref{th:change} following the approach of \cite{heathbrown1994sign}.

Let $n_0$ be the smallest integer $n$, such that $\Delta d_2(n;a_1, q_1, a_2, q_2)\!\neq\!0$. By the definition of $\Delta d_2(n;a_1, q_1, a_2, q_2)$, it is easy to see that $\Delta d_2(n_0;a_1, q_1, a_2, q_2)\!=\!1$ or $\!-1$, and  $n_0\!=\!\min\{a_1,q_1\!-a_1\}\!\times\min\{a_2,q_2\!-a_2\}$, which suggests $n_0\!<\!\frac{1}{4}q_1q_2$.

Suppose  $|f(t)|\leq c_1 t^\frac{1}{4}$. Let
\begin{equation*}
    S^*(t)=4\sqrt{2}\pi(q_1q_2)^{-1}t^{-\frac{1}{2}}\Big(S\big(q_1q_2t^2; \frac{a_1}{q_1}, \frac{a_2}{q_2}\big)+f(q_1q_2t^2)\Big),\quad \text{for }t\geq1.
\end{equation*}
Define
\begin{equation*}
    K_\zeta(u):=(1-|u|)\big(1+\zeta\sin(4\pi\alpha\sqrt{n_0}u)\big)\quad\text{for }|u|\leq1,
\end{equation*}
with $\zeta=1$ or $-1$, and $\alpha>n_0^\frac{1}{2}$ a large number.

Set $\zeta'=-\Delta d_2(n_0;a_1, q_1, a_2, q_2)\zeta$. Then it is easy to see that $\zeta'=1$ or $-1$,
and Theorem \ref{th:change} follows from Lemma \ref{lem:change} below.
\begin{lemma}\label{lem:change}
Suppose $T\gg(q_1q_2)^\varepsilon$ is a  large parameter. Then for each $\sqrt{T}\leq t\leq\sqrt{2T}$, we have
\begin{align*}
&\int_{\!-1}^1\!\!\!S^*\!(t\!+\!\alpha u)K_\zeta(u)du\!=\!\!\frac{\zeta'}{2n_0^\frac{3}{4}}\!\sin(4\pi t\sqrt{n_0}\!-\!\frac{3}{4}\pi)\!+\!O\big(\alpha^{\!-\!2}\!+\!t^{-\!\frac{1}{2}}\mathcal{L}^3 \!+\!c_1(q_1q_2)^{-\!\frac{3}{4}}\big).
\end{align*}
\end{lemma}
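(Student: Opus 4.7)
The plan is to substitute the Voronoi-type decomposition \eqref{vor0} (with $x=(t+\alpha u)^2$) into $S^*(t+\alpha u)$ and then compute each of the resulting integrals against $K_\zeta(u)$ on $[-1,1]$. After the prefactor $4\sqrt{2}\pi(q_1q_2)^{-1}(t+\alpha u)^{-1/2}$ cancels the $q_1q_2\, x^{1/4}/(4\sqrt{2}\pi)$ occurring in each piece of \eqref{vor0} (using $\sqrt{x}=t+\alpha u$), the $R_0$ contribution reduces to the clean sum $\sum_{n\leq y}\frac{\Delta d_2(n;a_1,q_1,a_2,q_2)}{n^{3/4}}\cos(4\pi(t+\alpha u)\sqrt{n}-\tfrac{3\pi}{4})$, and $R_{12},R_{21}$ take analogous forms with their own coefficients. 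The $f$-piece is bounded pointwise by $4\sqrt{2}\pi c_1 (q_1q_2)^{-3/4}$ by hypothesis, so its integral against $K_\zeta$ is immediately $O(c_1(q_1q_2)^{-3/4})$; the $O(q_1q_2\mathcal{L}^3)$ term in \eqref{vor0} becomes $O(t^{-1/2}\mathcal{L}^3)$ after the prefactor, accounting for two of the three error terms.

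The main term is extracted from the $n=n_0$ summand of $R_0$. Expanding $\cos(4\pi t\sqrt{n_0}-\tfrac{3\pi}{4}+4\pi\alpha u\sqrt{n_0})$ by the sum-of-angles identity and multiplying by $K_\zeta(u)=(1-|u|)(1+\zeta\sin(4\pi\alpha u\sqrt{n_0}))$, the symmetry $u\mapsto -u$ kills three of the four resulting products, leaving only
$$-\zeta\sin(4\pi t\sqrt{n_0}-\tfrac{3\pi}{4})\int_{-1}^{1}(1-|u|)\sin^{2}(4\pi\alpha u\sqrt{n_0})\,du.$$
Using $\sin^{2}=\tfrac12(1-\cos)$ together with the Fejér identity $\int_{-1}^{1}(1-|u|)\cos(\beta u)\,du=(\sin(\beta/2)/(\beta/2))^{2}\ll\beta^{-2}$, this integral equals $\tfrac12+O(\alpha^{-2})$. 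Combined with the relation $\Delta d_2(n_0)\zeta=-\zeta'$ (from the definition of $\zeta'$) and the factor $n_0^{-3/4}$, this yields exactly the stated main term with admissible error $O(\alpha^{-2})$.

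The remaining $n\neq n_0$, $n\leq y$ summands of $R_0$ must still contribute only $O(\alpha^{-2})$. For each such $n$, the relevant $u$-integrals reduce via product-to-sum to Fejér integrals at the frequencies $2\alpha\sqrt{n}$ and $2\alpha|\sqrt{n}\pm\sqrt{n_0}|$, all of which are $\gg 1$ because $\alpha>n_0^{1/2}$ and $|\sqrt{n}-\sqrt{n_0}|\gg n_0^{-1/2}$ for integer $n\neq n_0$. Summing the resulting bound $O((\alpha|\sqrt{n}\pm\sqrt{n_0}|)^{-2})$ against $|\Delta d_2(n)|n^{-3/4}\ll d(n)n^{-3/4}$ over $n\leq y$ is absolutely convergent and gives a contribution of size $\ll n_0^{O(1)}\alpha^{-2}$, which is absorbed into $O(\alpha^{-2})$ with an implicit constant depending on $q_1q_2$.

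The genuinely delicate part is controlling the Voronoi tails $R_{12},R_{21}$ and the error sums $G_{12},G_{21}$, whose raw pointwise bounds are much too weak. Following the Heath-Brown--Tsang strategy of \cite{heathbrown1994sign}, the plan is to choose $y$ just above $T^{\varepsilon}$ and $H\asymp\sqrt{q_1q_2T}$, and then exploit the high-frequency oscillation $\cos(4\pi(t+\alpha u)\sqrt{n})$ for $n>y$ by one or two integrations by parts in $u$ (gaining factors of $(\alpha\sqrt{n})^{-1}$), combined with a Cauchy--Schwarz / second-moment estimate on the congruence-restricted divisor sums $\Delta d_{2,i}(n,H,J;\cdot)$ via the standard divisor mean-value bounds adapted to the residue conditions. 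The $G$-sums are bounded by the classical mean-value estimate for $\sum_n\min(1,1/H\|\cdot\|)$ over Farey intervals. Assembling these four estimates yields total error $O(\alpha^{-2}+t^{-1/2}\mathcal{L}^{3}+c_1(q_1q_2)^{-3/4})$ and proves the lemma.
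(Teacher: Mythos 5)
Your main-term extraction is essentially the paper's own argument: substitute \eqref{vor0}, note that the prefactor turns the $f$-piece into $O(c_1(q_1q_2)^{-3/4})$ and the $O(q_1q_2\mathcal{L}^3)$ term into $O(t^{-1/2}\mathcal{L}^3)$, expand $\cos(4\pi(t+\alpha u)\sqrt{n}-\tfrac{3\pi}{4})$ against $K_\zeta$, and read off $\tfrac{\zeta'}{2}n_0^{-3/4}\sin(4\pi t\sqrt{n_0}-\tfrac{3\pi}{4})$ from $n=n_0$ with Fej\'er decay $O(\alpha^{-2}(\sqrt n-\sqrt{n_0})^{-2})$ for $n\neq n_0$. (Small quibble: only two of the four products are killed by the $u\mapsto -u$ symmetry; the even term $(1-|u|)\cos A\cos(4\pi\alpha u\sqrt{n_0})$ survives and must be disposed of by the Fej\'er identity, which you in effect do.) Two of your choices diverge from the paper and one of them is a real problem. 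First, your parameters: taking $y$ just above $T^{\varepsilon}$ is inconsistent with placing $n=n_0$ inside $R_0$, since $n_0=\min\{a_1,q_1-a_1\}\min\{a_2,q_2-a_2\}$ can be as large as $\asymp q_1q_2$, which exceeds $T^{\varepsilon}$ when $T=(q_1q_2)^{1+\varepsilon}$; the paper takes $y=T^{1/2}$ and $H=T$ precisely so that $n_0<y$ and so that the tails are harmless. Second, your Cauchy--Schwarz/second-moment apparatus for $R_{12},R_{21}$ is unnecessary: since $|\Delta d_{2,i}(n,H,J;\cdot)|\leq 4d(n)$, the same termwise Fej\'er bound you used for $R_0$ already gives $O(\alpha^{-2}\sum_{n>y}d(n)n^{-3/4}(\sqrt n-\sqrt{n_0})^{-2})=O(\alpha^{-2})$, which is all the paper needs. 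That part of your plan would work but is over-engineered.

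The genuine gap is the $G$-terms. A bound on $\sum_{n_1\leq q_1\sqrt T}\min\bigl(1,\tfrac{1}{H\|q_1t^2/n_1-r_2/q_2\|}\bigr)$ that is pointwise in $t$ (which is what ``the classical mean-value estimate over Farey intervals'' delivers) cannot suffice: for $n_1\lesssim t\sqrt{q_1}$ the consecutive values $q_1t^2/n_1$ mod $1$ are uncontrolled, so nothing rules out $\asymp\sqrt{q_1T}$ terms each contributing $1$, and after the prefactor $t^{-1/2}$ this leaves an error of size $T^{1/4}q_1^{-1/2}$, far larger than the target $O(t^{-1/2}\mathcal{L}^3)$. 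The saving must come from averaging over $u$: as $u$ runs over $[-1,1]$ the phase $q_1(t+\alpha u)^2/n_1$ moves by $\asymp q_1t\alpha/n_1\gg 1$, so each summand averages to $O(H^{-1}\log H)$. The paper implements this by expanding $\min(1,\tfrac{1}{H\|r\|})=\sum_h a(h)e(hr)$ with $a(0)\ll H^{-1}\log H$, $a(h)\ll\min(H^{-1}\log H,h^{-2}H)$, and applying the first derivative test to $\int_{-1}^1 e\bigl(h q_1(2t\alpha u+\alpha^2u^2)/n_1\bigr)du$, obtaining $\int_{-1}^1G^*(t+\alpha u)\,du\ll H^{-1}T^{1/2}\log^2H$, which with $H=T$ is negligible. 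Your proposal as written does not contain this step, and without it the lemma is not proved.
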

\begin{proof}
Let $J=[\frac{\mathcal{L}+2\log q_1 q_2-4\log \mathcal{L}}{\log2}]$, $H\geq2$ be a parameter to be determined, and $ T^\varepsilon<y\leq \min(H^2, (q_1q_2)^2 T)\mathcal{L}^{-4}$. From \eqref{vor0}, we have
\begin{align}\label{vor}
   \!S^*(t)\!=&R^*_0(t; y)\!+\!\!R^*_{12}(t; y, H)\!+\!\!R^*_{21}(t; y, H)\!+\!4\sqrt{2}\pi(q_1q_2)^{\!-1}t^{-\frac{1}{2}}f(q_1q_2t^2)\!\\
\nonumber&\!+\!O\big(t^{-\frac{1}{2}}\big(G^*_{12}(t; H)\!+\!G^*_{21}(t; H)\big)\big)
  \!+\!O\big(t^{-\frac{1}{2}}\mathcal{L}^3\big),
\end{align}
where
 \begin{gather*}
    \label{15}
    R^*_0(t; y)=\sum_{n\leq y}\frac{\cos\left(4\pi t\sqrt{n}-\frac{3\pi}{4}\right)}{n^{\frac{3}{4}}}\Delta d_2(n;a_1, q_1, a_2, q_2),\\
    \label{16}
     R^*_{12}(t; y, H)=\sum_{y<n\leq2^{J+1}H^2 }\frac{\cos\big(4\pi t\sqrt{n}-\frac{3\pi}{4}\big)}{n^{\frac{3}{4}}}\Delta d_{2,1}(n,H,J;a_1, q_1, a_2, q_2),\\
  \nonumber R^*_{21}(t; y, H)= \sum_{y<n\leq 2^{J+1}H^2 }\frac{\cos\big(4\pi t\sqrt{n}-\frac{3\pi}{4}\big)}{n^{\frac{3}{4}}}\Delta d_{2,2}(n,H,J;a_1, q_1, a_2, q_2),\\
      G^*_{12}(t; H) = O\Bigl(\frac{1}{q_1}\sum_{\begin{subarray}{c}  n_1\leq q_1\sqrt{T}\end{subarray}}\min\Bigl(1, \frac{1}{H\|\frac{q_1t^2}{n_1}-\frac{r_2}{q_2}\|}\Bigr)\Bigr),\label{G12}\\
 \nonumber  G^*_{21}(t; H)=O\Big(\frac{1}{q_2}\sum_{\begin{subarray}{c}  n_2\leq q_2\sqrt{T}\end{subarray}}\min\Big(1, \frac{1}{H\|\frac{q_2t^2}{n_2}-\frac{r_1}{q_1}\|}\Big)\Big).
   \end{gather*}
Denote
\begin{align*}
    &R^*(t)\!=\!R^*_0(t; y)\!+\!R^*_{12}(t; y, H)\!+\!R^*_{21}(t; y, H),&
    &G^*(t)\!=\!G^*_{12}(t; H)\!+\!G^*_{21}(t; H).
\end{align*}
Then
\begin{equation}\label{s}
  S^*(t)=R^*(t)\!+\!4\sqrt{2}\pi(q_1q_2)^{-1}t^{-\frac{1}{2}}f(q_1q_2t^2)\!+\!O\big(t^{-\frac{1}{2}}G^*(t)\big)
  \!+\!O\big(t^{-\frac{1}{2}}\mathcal{L}^3\big).
\end{equation}

We first consider $\int_{-1}^1G^*(t+\alpha u)du$. Noting that
\begin{equation*}
    \min\Big(1,\frac{1}{H\|r\|}\Big)=\sum_{h=-\infty}^\infty a(h)e(hr)
\end{equation*}
with
\begin{align*}
    a(0)\ll H^{-1}\log H,&&a(h)\ll\min\Big(H^{-1}\log H,h^{-2}H\Big),\ h\neq0.
\end{align*}
We have
\begin{align*}
    &\int_{-1}^1G^*_{12}(t+\alpha u;H)du\\
    =&\frac{1}{q_1}\sum_{h=-\infty}^\infty a(h)\sum_{n_1\leq q_1\sqrt{T}}e\Big(\frac{hq_1t^2}{n_1}-\frac{hr_2}{q_2}\Big)\int_{-1}^1e\Big(\frac{2hq_1t\alpha u+hq_1\alpha^2u^2}{n_1}\Big)du\\
    \ll&|a(0)|\sqrt{T}+\frac{1}{q_1}\sum_{h=1}^\infty |a(h)|\sum_{n_1\leq q_1\sqrt{T}}\frac{n_1}{hq_1t\alpha}\\
    \ll&H^{-1}T^\frac{1}{2}\log H+\sum_{h=1}^HH^{-1}\log HT(ht\alpha)^{-1}+\sum_{h=H}^\infty HT(t\alpha)^{-1}h^{-3}\\
    \ll&H^{-1}T^\frac{1}{2}\log^2 H,
\end{align*}
where the first derivative test was used. This estimate remain valid with $G^*_{12}$ replaced by $G^*_{21}$, which yields
\begin{equation}\label{G}
    \int_{-1}^1G^*(t+\alpha u)du\ll H^{-1}T^\frac{1}{2}\log^2 H.
\end{equation}

Now we  estimate $\int_{-1}^1R^*(t+\alpha u)K_\zeta(u)du$. By the elementary formula
\begin{align*}
    &\cos\big(4\pi (t+\alpha u)\sqrt{n}-\frac{3\pi}{4}\big)\\
    =&\cos\big(4\pi t\sqrt{n}-\frac{3\pi}{4}\big)\cos(4\pi \alpha u\sqrt{n})-\sin\big(4\pi t\sqrt{n}-\frac{3\pi}{4}\big)\sin(4\pi \alpha u\sqrt{n}),
\end{align*}
we get
\begin{align*}
    \int_{-1}^1\cos\big(4\pi (t+\alpha u)\sqrt{n}-\frac{3\pi}{4}\big)(1-|u|)\big(1+\zeta\sin(4\pi\alpha\sqrt{n_0}u)\big)du
    =I_1+I_2,
\end{align*}
with
\begin{align*}
    I_1=&\cos\big(4\pi t\sqrt{n}-\frac{3\pi}{4}\big)\int_{-1}^1\cos(4\pi \alpha u\sqrt{n})(1-|u|)\big(1+\zeta\sin(4\pi\alpha\sqrt{n_0}u)\big)du\\
      =&\cos\big(4\pi t\sqrt{n}-\frac{3\pi}{4}\big)\int_{-1}^1\cos(4\pi \alpha u\sqrt{n})(1-|u|)du,
 \end{align*}
  \begin{align*}
    I_2\!=&\sin\big(4\pi t\sqrt{n}-\frac{3\pi}{4}\big)\int_{-1}^1\sin(4\pi \alpha u\sqrt{n})(1-|u|)\big(1+\zeta\sin(4\pi\alpha\sqrt{n_0}u)\big)du\\
    =&\zeta\sin\big(4\pi t\sqrt{n}-\frac{3\pi}{4}\big)\int_{-1}^1\sin(4\pi \alpha u\sqrt{n})(1-|u|)\sin(4\pi\alpha\sqrt{n_0}u)du\\
   =&\frac{\zeta}{2}\sin\big(4\pi t\sqrt{n}-\frac{3\pi}{4}\big)\int_{-1}^1(1-|u|)\cos\big(4\pi\alpha u(\sqrt{n}-\sqrt{n_0})\big)du\\
   &-\frac{\zeta}{2}\sin\big(4\pi t\sqrt{n}-\frac{3\pi}{4}\big)\int_{-1}^1(1-|u|)\cos\big(4\pi\alpha u(\sqrt{n}+\sqrt{n_0})\big)du.\\
\end{align*}
By using
\begin{equation*}
     \int_0^1(1-u)\cos(A u)du\ll |A|^{-2}\quad A\neq0,
\end{equation*}
we have
\begin{align*}
    I_1\ll\alpha^{-2}n^{-1},&&
    I_2=\left\{\begin{array}{ll}\frac{\zeta}{2}\sin\big(4\pi t\sqrt{n_0}\!-\!\!\frac{3\pi}{4}\big)+O(\alpha^{-2}n_0^{-1}),&n=n_0, \\O(\alpha^{-2}(\sqrt{n}-\sqrt{n_0})^{-2}),&n\neq n_0,\end{array}\right.
\end{align*}
which suggests
\begin{align*}
    &\int_{-1}^1\cos\big(4\pi (t+\alpha u)\sqrt{n}-\frac{3\pi}{4}\big)K_\zeta(u)du\\
    =&\left\{\begin{array}{ll}\frac{\zeta}{2}\sin\big(4\pi t\sqrt{n_0}\!-\!\!\frac{3\pi}{4}\big)+O(\alpha^{-2}n_0^{-1}),&n=n_0, \\O(\alpha^{-2}(\sqrt{n}-\sqrt{n_0})^{-2}),&n\neq n_0.\end{array}\right.
\end{align*}
Take $H= T$, $y= T^\frac{1}{2}$. Then clearly $n_0<y$. Thus we get
\begin{align}\label{R}
   &\!\!\!\int_{-1}^1\!R^*(t+\alpha u)K_\zeta(u)du\\
\nonumber =\!-&\frac{\zeta}{2n_0^\frac{3}{4}}\sin\!\big(4\pi t\sqrt{n_0}\!-\!\frac{3\pi}{4}\big)\Delta d_2(n_0;a_1, q_1, a_2, q_2)\!+\!O\Big(\!\sum_{n>n_0}\!\frac{\alpha^{-2}n^{-\frac{3}{4}}d(n)}{(\sqrt{n}\!-\!\!\sqrt{n_0})^{2}}\Big)\\
\nonumber =\!-&\frac{\zeta}{2n_0^\frac{3}{4}}\sin\big(4\pi t\sqrt{n_0}-\frac{3\pi}{4}\big)\Delta d_2(n_0;a_1, q_1, a_2, q_2)+O(\alpha^{-2}),
\end{align}
by using $\sum_{n>n_0}\frac{d(n)}{n^\frac{3}{4}(\sqrt{n}-\sqrt{n_0})^{2}}\ll1$.

Note that $H=T$, $t\asymp T^\frac{1}{2}$. From \eqref{s}-\eqref{R}, we see
\begin{align*}
   &\int_{-1}^1S^*(t+\alpha u)K_\zeta(u)du\\
=&-\frac{\zeta}{2n_0^\frac{3}{4}}\sin\big(4\pi t\sqrt{n_0}\!-\!\frac{3\pi}{4}\big)\Delta d_2(n_0;a_1, q_1, a_2, q_2)+O(\alpha^{-2})\\
&+O\big((q_1q_2)^{-1}t^{-\frac{1}{2}}\sup_{|u|\leq1}f(q_1q_2(t\!+\!\alpha u)^2)\big)\!+\!O\big(t^{-\frac{1}{2}}H^{-1}T^\frac{1}{2}\mathcal{L}^2\big)
  \!+\!O\big(t^{-\frac{1}{2}}\mathcal{L}^3\big)\\
  =&\frac{\zeta'}{2n_0^\frac{3}{4}}\!\sin\big(4\pi t\sqrt{n_0}\!-\!\frac{3\pi}{4}\big)\!+\!O(\alpha^{\!-2})\!+\!O\big(c_1(q_1q_2)^{\!-\frac{3}{4}}\big)\!\!+\!O(t^{-\frac{1}{2}} \mathcal{L}^3).
\end{align*}
Thus we complete the proof of Lemma \ref{lem:change}
\end{proof}

\section{ The mean value of $S\big(q_1q_2x; \frac{a_1}{q_1}, \frac{a_2}{q_2}\big)$ in short intervals}

Suppose $T\gg (q_1q_2)^{\varepsilon}$ is a large parameter, $1\leq h\leq \frac{1}{2}\sqrt{T}$. Denote $S(q_1q_2x)=S\big(q_1q_2x; \frac{a_1}{q_1}, \frac{a_2}{q_2}\big)$.  In this section we shall
estimate the integral
\begin{equation*}
    I(T,h)=\int_1^T\big(S(q_1q_2(x+h)-S(q_1q_2x)\big)^2dx,
\end{equation*}
which would play an important role in the proof of Theorem \ref{th:maintain}. This type of integral was studied for the error term in the mean square of $\zeta(\frac{1}{2}+ it)$ by Good \cite{good1977einomega},
for the error term in the Dirichlet divisor problem by Jutila \cite{jutila1984divisor} and for the error term in Weyl's law for Heisenberg manifold by Tsang and Zhai \cite{tsang2012sign}. Here we follows the approach of Tsang and Zhai \cite{tsang2012sign} and prove the following
\begin{lemma}\label{lem:meanvalue}  The estimate
\begin{equation*}
    I(T,h)\ll (q_1q_2)^{2}hT\log^3\frac{\sqrt{T}}{h}+(q_1q_2)^{2}T\mathcal{L}^6
\end{equation*}
holds  uniformly for $1\leq h\leq \frac{1}{2}\sqrt{T}$.
\end{lemma}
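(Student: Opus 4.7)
The plan is to adapt the Tsang--Zhai approach by inserting the Voronoi-type decomposition \eqref{vor0} into both $S(q_1q_2(x+h))$ and $S(q_1q_2x)$ and estimating the mean square of their difference piece by piece. I would first split $[1,T]$ dyadically into subintervals $[T',2T']$ with $T^\varepsilon \ll T' \leq T$, and on each such interval apply \eqref{vor0} with the choices $H=T'$ and $y=T'/h^2$; this keeps $y \geq T^\varepsilon$ provided $h \leq \tfrac{1}{2}\sqrt{T}$, and the shift by $h$ only perturbs $x$ within the same interval up to a harmless overlap. Summing the resulting bounds over the $O(\mathcal{L})$ dyadic pieces then gives the full estimate.

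The dominant contribution is the mean square of $R_0(x+h;y)-R_0(x;y)$. Applying
\[
\cos A-\cos B=-2\sin\tfrac{A+B}{2}\sin\tfrac{A-B}{2}
\]
with $A=4\pi\sqrt{n(x+h)}-\tfrac{3\pi}{4}$ and $B=4\pi\sqrt{nx}-\tfrac{3\pi}{4}$, and using $\sqrt{n(x+h)}-\sqrt{nx}\asymp h\sqrt{n/x}$ on $[T',2T']$, the factor $\sin\tfrac{A-B}{2}$ produces a weight $\ll\min(1,h\sqrt{n/T'})$ on the $n$th term. Squaring and integrating produces a diagonal ($n_1=n_2=n$) contribution
\[
\ll (q_1q_2)^2 \int_{T'}^{2T'} x^{1/2}\sum_{n\leq y}\frac{|\Delta d_2(n)|^2}{n^{3/2}}\min(1,h^2n/x)\,dx,
\]
which, on splitting at $n\asymp x/h^2$ and using $|\Delta d_2(n)|\leq d(n)$ together with the standard bounds $\sum_{n\leq N}d^2(n)/\sqrt n\ll\sqrt N\log^3 N$ and $\sum_{n>N}d^2(n)/n^{3/2}\ll\log^3 N/\sqrt N$, yields the target term $(q_1q_2)^2 hT'\log^3(\sqrt{T'}/h)$. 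Off-diagonal pairs $n_1\neq n_2$ produce oscillatory integrals of the form $\int_{T'}^{2T'} x^{1/2} e\bigl(2(\sqrt{n_1}\pm\sqrt{n_2})\sqrt x\bigr)\,dx$; by the first-derivative test these are $\ll T'/|\sqrt{n_1}\pm\sqrt{n_2}|$, and summing against $d(n_1)d(n_2)/(n_1n_2)^{3/4}$ keeps the order of magnitude at that of the diagonal.

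The tail sums $R_{12},R_{21}$ are treated by the same mean-square expansion but without the smallness factor from $\sin\tfrac{A-B}{2}$; because they are supported on $y<n\leq 2^{J+1}H^2$ and the coefficient $\Delta d_{2,i}$ is bounded by a congruence-restricted divisor count, their contribution is absorbed into $(q_1q_2)^2 T\mathcal{L}^6$. For the residuals $G_{12},G_{21}$, I would use the Fourier expansion $\min(1,1/(H\|\cdot\|))=\sum_k a(k)e(k\cdot)$ already employed in the proof of Lemma \ref{lem:change} and combine it with the first-derivative test; with $H=T'$ the mean square of $G^*(x+h)-G^*(x)$ is negligible against the stated bound. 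The $O(q_1q_2\mathcal{L}^3)$ pointwise error contributes at most $(q_1q_2)^2 T\mathcal{L}^6$ in mean square.

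The main obstacle is the off-diagonal contribution from $R_0$ when $\sqrt{n_1}$ is close to $\sqrt{n_2}$: here the first-derivative test is ineffective, and in order to extract precisely $\log^3(\sqrt{T}/h)$ (rather than a worse power of $\mathcal{L}$) one must retain the $\min(1,h\sqrt{n_i/T'})$ gain on both indices simultaneously and handle the near-diagonal pairs by a direct Cauchy--Schwarz argument, using that the double sum $\sum_{n_1\asymp n_2}d(n_1)d(n_2)/(n_1n_2)^{3/4}$ over blocks of size $\ll|\sqrt{n_1}-\sqrt{n_2}|^{-1}$ can be controlled by a divisor-type second moment. This careful splitting of the off-diagonal range is the technical heart of the proof.
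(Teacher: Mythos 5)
Your overall strategy --- insert the Voronoi-type formula \eqref{vor0}, isolate $R_0(x+h;y)-R_0(x;y)$, split its mean square into diagonal and off-diagonal parts, and apply the first-derivative test --- is the same as the paper's, but your parameter choices do not deliver the stated bound. First, the claim that $y=T'/h^{2}\geq T^{\varepsilon}$ on every dyadic block is false: when $T'\ll h^{2}$ (which happens for $h$ near $\tfrac12\sqrt T$ on the low blocks) you get $y\ll 1$, and \eqref{vor0} also requires $y\leq\min(H^{2},(q_1q_2)^{2}T)\mathcal{L}^{-4}$, which $y=T'$ (the case $h=1$) violates for small fixed $q_1q_2$. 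The paper avoids both problems by first cutting off $[1,100\max(h^{2},T^{2/3})]$ and bounding that piece by $(q_1q_2)^{2}(h^{3}+T)\ll(q_1q_2)^{2}Th$ directly from the second-moment asymptotic \eqref{sk}; you need such a step. Second, and more seriously, with $y=T'/h^{2}$ the tail terms $R_{12},R_{21}$ contribute $\ll(q_1q_2)^{2}U^{3/2}y^{-1/2}\log^{3}U=(q_1q_2)^{2}Uh\log^{3}U$ in mean square, which is \emph{not} $\ll(q_1q_2)^{2}\bigl(Uh\log^{3}(\sqrt U/h)+U\mathcal{L}^{6}\bigr)$ when $h$ is close to $\sqrt U$. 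The paper's choice $y=\min\bigl(\tfrac12Uh^{-1},U\log^{-6}U\bigr)$ gives $(q_1q_2)^{2}\bigl(Uh^{1/2}\log^{3}U+U\log^{6}U\bigr)$, which is admissible by AM--GM; this choice of $y$ is the real crux of the parameter optimization and cannot be replaced by $U/h^{2}$.

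Conversely, the difficulty you single out as the technical heart --- the near-diagonal pairs in the off-diagonal sum --- is not an obstacle: the off-diagonal terms need only be absorbed into the secondary term $(q_1q_2)^{2}T\mathcal{L}^{6}$, not matched to the main term $hT\log^{3}(\sqrt T/h)$. The crude first-derivative-test bound $(q_1q_2)^{2}U\sum_{m\neq n}d(m)d(n)(mn)^{-3/4}|\sqrt m-\sqrt n|^{-1}\ll(q_1q_2)^{2}U\log^{4}y$ already suffices, with no retention of the $\min(1,h\sqrt{n/x})$ gain and no Cauchy--Schwarz over near-diagonal blocks. (One does need $m,n\leq y\leq\tfrac12Uh^{-1}$ to keep $|g_{\pm}'(x)|\gg|\sqrt m\pm\sqrt n|/\sqrt x$ after the $h$-shift --- yet another reason for the paper's choice of $y$.) Finally, you omit the amplitude difference $(x+h)^{1/4}-x^{1/4}$ in $R_0$; it contributes only $\ll(q_1q_2)^{2}h^{2}U^{-1/2}$ but must be accounted for.
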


\begin{proof}
Write
\begin{equation}\label{I}
    I(T,h)=\int_1+\int_2,
\end{equation}
where
\begin{align*}
    \int_1=&\int_1^{100\max(h^2,T^\frac{2}{3})}(S(q_1q_2(x+h)-S(q_1q_2x)\big)^2dx,\\
    \int_2=&\int_{100\max(h^2,T^\frac{2}{3})}^T(S(q_1q_2(x+h)-S(q_1q_2x)\big)^2dx.
\end{align*}
From \eqref{sk}, we see that
\begin{equation}\label{int1}
    \int_1\ll (q_1q_2)^2(h^3+T)\ll (q_1q_2)^2Th.
\end{equation}

For $\int_2$, first we estimate the integral
\begin{equation}\label{J}
    J(U,h)\!=\!\!\int_U^{2U}\!\!(S(q_1q_2(x\!+\!h)\!-\!S(q_1q_2x)\big)^2dx, \quad100\max(h^2,T^\frac{2}{3})\!\leq\! U\!\leq\! T.
\end{equation}
Let $T=2U$ in \eqref{vor0}. Then
\begin{align*}
   S(q_1q_2x)
=&R_0(x; y)\!+\!R_{12}(x; y, H)\!+\!R_{21}(x; y, H)\!\\
\nonumber&+\!G_{12}(x; H)\!+\!G_{21}(x; H)
  \!+\!O\big(q_1q_2\log^3U\big).
\end{align*}
Take $H=U$, $y=\min\big(\frac{1}{2}Uh^{-1},U\log^{-6} U\big)$. From \cite[Lemma 6.2 and Lemma 6.5]{jia2016weighted}, we see
\begin{gather*}
\int_U^{2U}|G_{12}(x; H)\!+\!G_{21}(x; H)|^2dx\ll (q_1q_2)^2U\log U,\label{G2}\\
\int_U^{2U}|R_{12}(x; y, H)\!+\!R_{21}(x; y, H)|^2dx\ll (q_1q_2)^2U^\frac{3}{2}y^{-\frac{1}{2}}\log^3 U.\label{R2}
\end{gather*}
Thus we get
\begin{align}\label{s/r}
    \int_U^{2U}\!\!\!\big(S(q_1q_2x)-\!R_{0}(x; y)\big)^2dx\!\ll& (q_1q_2)^2U^\frac{3}{2}y^{-\frac{1}{2}}\log^3 U\!+\!(q_1q_2)^2U\log^6 U\\
   \nonumber \ll& (q_1q_2)^2Uh^\frac{1}{2}\log^3 U+(q_1q_2)^2U\log^6 U.
\end{align}

We now estimate the integral $\int_U^{2U}\big(R_{0}(x+h; y)-R_{0}(x; y)\big)^2dx$. From \eqref{R_0}, we have
\begin{equation}\label{dR0}
    R_{0}(x+h; y)-R_{0}(x; y)=F_1(x)+F_2(x),
\end{equation}
where
\begin{align*}
    F_1(x)=&\frac{q_1q_2}{4\sqrt{2}\pi}\big((x+h)^\frac{1}{4}\!-\!x^\frac{1}{4}\big)\sum_{n\leq y}\frac{\Delta d_2(n;a_1, q_1, a_2, q_2)}{n^{\frac{3}{4}}}\cos\big(4\pi\sqrt{n(x+h)}\!-\!\frac{3\pi}{4}\big),\\
    F_2(x)\!=&\frac{q_1q_2 x^\frac{1}{4}}{4\sqrt{2}\pi}\!\!\sum_{n\leq y}\!\frac{\Delta d_2(\!n;a_1, q_1, a_2, q_2\!)}{n^{\frac{3}{4}}}\!\Big(\!\!\cos\!\big(\!4\pi\sqrt{n(x\!+\!h)}\!-\!\!\frac{3\pi}{4}\!\big) \!-\!\cos\!\big(\!4\pi\sqrt{nx}\!-\!\!\frac{3\pi}{4}\!\big)\!\Big).
\end{align*}

From \cite[Lemma 6.3]{jia2016weighted}, we get
\begin{align}\label{F1}
   \int_U^{2U}F_1^2(x)dx\ll h^2U^{-2}\int_U^{2U}R_0^2(x+h)dx\ll (q_1q_2)^2h^2U^{-\frac{1}{2}}.
\end{align}

For  the mean square of $F_2(x)$, we see
\begin{equation}\label{F20}
    F_2^2=F_{21}+F_{22},
\end{equation}
with
\begin{align*}
    F_{21}(x)=&\frac{(q_1q_2)^2}{32\pi^2}x^\frac{1}{2}\sum_{n\leq y}\frac{\Delta d_2^2(n; a_1, q_1, a_2, q_2)}{n^{\frac{3}{2}}}\\
    &\times\Big(\cos\big(4\pi\sqrt{n(x+h)}-\frac{3\pi}{4}\big) -\cos\big(4\pi\sqrt{nx}-\frac{3\pi}{4}\big)\Big)^2,\\
    F_{22}(x)=&\frac{(q_1q_2)^2}{32\pi^2}x^\frac{1}{2}\sum_{\begin{subarray}{c}m,n\leq y\\m\neq n\end{subarray}}\frac{\Delta d_2(m; a_1, q_1, a_2, q_2)\Delta d_2(n;a_1, q_1, a_2, q_2)}{(mn)^{\frac{3}{4}}}\\
    &\times\Big(\cos\big(4\pi\sqrt{m(x+h)}-\frac{3\pi}{4}\big) -\cos\big(4\pi\sqrt{mx}-\frac{3\pi}{4}\big)\Big)\\
    &\times\Big(\cos\big(4\pi\sqrt{n(x+h)}-\frac{3\pi}{4}\big) -\cos\big(4\pi\sqrt{nx}-\frac{3\pi}{4}\big)\Big).
\end{align*}
By writing
 \begin{equation*}
    \cos\!\big(4\pi\sqrt{n(x\!+\!h)}-\frac{3\pi}{4}\!\big) -\cos\!\big(4\pi\sqrt{nx}-\frac{3\pi}{4}\big) \!\!=\!\!\sum_{j=0}^1(\!-\!1)^{j+1}\!\cos\!\big(4\pi\sqrt{n(x\!\!+\!\!jh)}-\frac{3\pi}{4}\big),
 \end{equation*}
 we get
 \begin{align}\label{F220}
F_{22}(x)\!=&\frac{(q_1q_2)^2}{32\pi^2}x^\frac{1}{2}\!\!\sum_{j_1=0}^1\sum_{j_2=0}^1\!(\!-\!1)^{j_1\!+\!j_2}\!\! \!\sum_{\begin{subarray}{c}m,n\leq y\\m\neq n\end{subarray}}\!\!\!\frac{\Delta d_2(m; a_1, q_1, a_2, q_2)\Delta d_2(n;a_1, q_1, a_2, q_2)}{(mn)^{\frac{3}{4}}}\\
    \nonumber &\times \cos\big(4\pi\sqrt{m(x\!+\!j_1h)}-\!\frac{3\pi}{4}\big)\cos\big(4\pi\sqrt{n(x\!+\!j_2h)}\!-\!\frac{3\pi}{4}\big)\\
   \nonumber =&:F_{221}(x)+F_{222}(x),
 \end{align}
where
\begin{align*}
    F_{221}(x) =&\frac{(q_1q_2)^2}{64\pi^2}x^\frac{1}{2} \sum_{j_1 =0}^1\sum_{j_2=0}^1 ( - 1)^{j_1 + j_2} \sum_{\begin{subarray}{c}m,n\leq y\\m\neq n\end{subarray}} \frac{1}{(mn)^{\frac{3}{4}}}\Delta d_2(m; a_1, q_1, a_2, q_2)\\
    \nonumber &\times  \Delta d_2(n;a_1, q_1, a_2, q_2)\cos\big(4\pi\sqrt{m(x\!+\!j_1h)}-\!4\pi\sqrt{n(x\!+\!j_2h)}\big),
 \end{align*}
\begin{align*}
    F_{222}(x)=&\frac{(q_1q_2)^2}{64\pi^2}x^\frac{1}{2}\!\sum_{j_1=0}^1\sum_{j_2=0}^1(-1)^{j_1+j_2+1}\!\!\sum_{\begin{subarray}{c}m,n\leq y\\m\neq n\end{subarray}}\!\!\frac{1}{(mn)^{\frac{3}{4}}}\Delta d_2(m; a_1, q_1, a_2, q_2)\\
    \nonumber &\times \Delta d_2(n;a_1, q_1, a_2, q_2)\sin\big(4\pi\sqrt{m(x\!+\!j_1h)}+\!4\pi\sqrt{n(x\!+\!j_2h)}\big).
\end{align*}
Let
\begin{equation*}
    g_\pm(x)=4\pi\sqrt{m(x\!+\!j_1h)}\pm\!4\pi\sqrt{n(x\!+\!j_2h)}.
\end{equation*}
Using
\begin{equation*}
    (1+t)^\frac{1}{2}=1+\sum_{v=1}^\infty d_vt^v\quad\big(|t|\leq\frac{1}{2}\big),
\end{equation*}
with $|d_v|<1$, we see
\begin{equation*}
    g_\pm(x)=4\pi\sqrt{x}(\sqrt{m}\pm\sqrt{n})+4\pi\sum_{v=1}^\infty \frac{d_vh^v}{x^{v-\frac{1}{2}}}(\sqrt{m}j_1^v\pm\sqrt{n}j_2^v).
\end{equation*}
Noting that $m,n\leq y\leq \frac{1}{2}Uh^{-1}$, we have
\begin{equation*}
    |g'_\pm(x)|\gg \frac{1}{\sqrt{x}}|\sqrt{m}\pm\sqrt{n}|\quad(m\neq n).
\end{equation*}
Then by the  the first derivative test we get
\begin{align*}
    \int_U^{2U}F_{221}(x)dx\ll&(q_1q_2)^2U\sum_{\begin{subarray}{c}m,n\leq y\\m\neq n\end{subarray}} \frac{\Delta d_2(m; a_1, q_1, a_2, q_2)\Delta d_2(n;a_1, q_1, a_2, q_2)}{(mn)^{\frac{3}{4}}|\sqrt{m}-\sqrt{n}|},\\
    \int_U^{2U}F_{222}(x)dx\ll&(q_1q_2)^2U\sum_{\begin{subarray}{c}m,n\leq y\\m\neq n\end{subarray}} \frac{\Delta d_2(m; a_1, q_1, a_2, q_2)\Delta d_2(n;a_1, q_1, a_2, q_2)}{(mn)^{\frac{3}{4}}|\sqrt{m}+\sqrt{n}|}.
\end{align*}
From \eqref{F220}, we obtain
\begin{align}\label{F22}
    \int_U^{2U}\!\!\!F_{22}(x)dx\!\ll&(q_1q_2)^2U\!\!\sum_{\begin{subarray}{c}m,n\leq y\\m\neq n\end{subarray}} \!\!\frac{\Delta d_2(m; a_1, q_1, a_2, q_2)\Delta d_2(n;a_1, q_1, a_2, q_2)}{(mn)^{\frac{3}{4}}|\sqrt{m}-\sqrt{n}|}\\
   \nonumber \ll& (q_1q_2)^2U\log^4y,
\end{align}
where we used the estimate  $\sum_{n\leq N}d(n)\ll N\log N$.

By the  elementary formulas
\begin{align*}
    \cos u-\cos v=-2\sin\big(\frac{u+v}{2}\big)\sin\big(\frac{u-v}{2}\big),\quad\text{and}\quad \sin^2u=\frac{1}{2}(1-\cos 2u),
\end{align*}
we have
\begin{align}\label{F210}
   &\int_U^{2U}F_{21}(x)dx\\
  \nonumber =&\frac{(q_1q_2)^2}{8\pi^2}\!\sum_{n\leq y}\!\frac{\Delta d_2^2(n; a_1, q_1, a_2, q_2)}{n^{\frac{3}{2}}}\!\int_u^{2U}\!\!\!\!x^\frac{1}{2}\\
   \nonumber &\times \sin^2\!\big(2\pi\sqrt{n(x\!+\!h)} \!+\!2\pi\sqrt{nx}\!-\!\frac{3\pi}{4}\big)\sin^2\!\big(2\pi\sqrt{n(x\!+\!h)}\!-\!2\pi\sqrt{nx}\big)dx\\
   \nonumber =&:I_{211}+I_{212},
\end{align}
where
\begin{align*}
    I_{211}=&\frac{(q_1q_2)^2}{16\pi^2}\!\sum_{n\leq y}\!\frac{\Delta d_2^2(n; a_1, q_1, a_2, q_2)}{n^{\frac{3}{2}}}\!\!\int_U^{2U}\!\!\!x^\frac{1}{2} \sin^2\big(2\pi\sqrt{n(x\!+\!h)}\!-\!2\pi\sqrt{nx}\big)dx,\\
    I_{212}=&\frac{(q_1q_2)^2}{16\pi^2}\!\sum_{n\leq y}\!\frac{\Delta d_2^2(n; a_1, q_1, a_2, q_2)}{n^{\frac{3}{2}}}\!\!\int_U^{2U}\!\!\!x^\frac{1}{2} \sin\big(4\pi\sqrt{n(x\!+\!h)}\!+\!4\pi\sqrt{nx}\big)\\
    &\hspace{10em}\times \sin^2\big(2\pi\sqrt{n(x\!+\!h)}\!-\!2\pi\sqrt{nx}\big)dx.
\end{align*}
 By the first derivative test, we have
 \begin{equation*}
    L_n(t):=\int_U^tx^\frac{1}{2} \sin\big(4\pi\sqrt{n(x\!+\!h)}\!+\!4\pi\sqrt{nx}\big)dx \ll Un^{-\frac{1}{2}},\quad U\leq t\leq2U.
 \end{equation*}
Using the integration by parts, we obtain
\begin{align*}
    &\int_U^{2U}\!\!\!x^\frac{1}{2} \sin\big(4\pi\sqrt{n(x\!+\!h)}\!+\!4\pi\sqrt{nx}\big)\sin^2\big(2\pi\sqrt{n(x\!+\!h)}\!-\!2\pi\sqrt{nx}\big)dx\\
    =&\int_U^{2U}\!\!\!\sin^2\big(2\pi\sqrt{n(x\!+\!h)}\!-\!2\pi\sqrt{nx}\big)dL_n(x)\\
    =&L_n(2U)\sin^2\!\big(2\pi\sqrt{n(2U\!+\!h)}\!-\!2\pi\sqrt{2nU}\big)\!- \! 2\int_U^{2U}\!\!\!L_n(x)\Big(\frac{\pi\sqrt{n}}{\sqrt{x\!+\!h}} \!-\!\frac{\pi\sqrt{n}}{\sqrt{x}}\Big)\\
    &\!\times\!\sin\big(2\pi\sqrt{n(x\!+\!h)}\!-\!2\pi\sqrt{nx}\big)\cos\big(2\pi\sqrt{n(x\!+\!h)}\!-\!2\pi\sqrt{nx}\big) dx\\
    \ll&Un^{-\frac{1}{2}}+U^{\frac{1}{2}}h,
\end{align*}
which yields
\begin{align}\label{I212}
    I_{212}\ll&(q_1q_2)^2\sum_{n\leq y}\!\frac{d^2(n)}{n^{\frac{3}{2}}}\big(Un^{-\frac{1}{2}}+U^{\frac{1}{2}}h\big)\\  \nonumber\ll&(q_1q_2)^2\big(U+U^{\frac{1}{2}}h\big)\ll(q_1q_2)^2U.
\end{align}
By using
\begin{equation*}
    \sqrt{x+h}=x^{\frac{1}{2}}+hx^{-\frac{1}{2}}+O(h^2x^{-\frac{3}{2}}),\quad x\geq100h^2,
\end{equation*}
we get
\begin{align*}
    \sin^2\big(2\pi\sqrt{n(x\!+\!h)}\!-\!2\pi\sqrt{nx}\big)=&\sin^2\big(\pi hn^{\frac{1}{2}}x^{-\frac{1}{2}}+O(h^2n^\frac{1}{2}x^{-\frac{3}{2}})\big)\\
    =&\sin^2\big(\pi hn^{\frac{1}{2}}x^{-\frac{1}{2}}\big) +O(h^2n^\frac{1}{2}x^{-\frac{3}{2}}).
\end{align*}
Noting that
\begin{align*}
    \int_U^{2U}\!\!\!x^\frac{1}{2}\sin^2\big(\pi hn^{\frac{1}{2}}x^{-\frac{1}{2}}\big)dx
    \ll& \int_U^{2U}\!\!\!x^\frac{1}{2}\min\big(1,h^2nx^{-1}\big)dx\\
    \ll&\left\{\begin{array}{ll}U^\frac{1}{2}h^2n,&n\leq Uh^{-2},\\U^{\frac{3}{2}},&n> Uh^{-2},\end{array}\right.
\end{align*}
we have
\begin{align}\label{I211}
    &I_{211}\ll(q_1q_2)^2\!\sum_{n\leq y}\!\frac{ d^2(n)}{n^{\frac{3}{2}}}\!\!\int_U^{2U}\!\!\!x^\frac{1}{2}\big(\sin^2(\pi hn^{\frac{1}{2}}x^{-\frac{1}{2}}) +O(h^2n^\frac{1}{2}x^{-\frac{3}{2}})\big) dx\\
   \nonumber \ll&(q_1q_2)^2\!\sum_{n\leq y}\!\frac{ d^2(n)}{n^{\frac{3}{2}}}\!\!\int_U^{2U}\!\!\!x^\frac{1}{2}\sin^2(\pi hn^{\frac{1}{2}}x^{-\frac{1}{2}}) dx\!+\!O\Big((q_1q_2 h)^2\!\sum_{n\leq y}\!\frac{ d^2(n)}{n}\Big)\\
   \nonumber\ll& (q_1q_2 h)^2 U^\frac{1}{2}\!\!\!\sum_{n\leq Uh^{-2}}\!\!\!\frac{ d^2(n)}{n^\frac{1}{2}} \!+\!(q_1q_2)^2 U^\frac{3}{2}\!\!\!\sum_{n> Uh^{-2}}\!\!\frac{ d^2(n)}{n^\frac{3}{2}} \!+\!O\big((q_1q_2 h)^2\log^4y\big)\\
   \nonumber\ll& (q_1q_2)^2U h\log^3\frac{ \sqrt{U}}{h},
\end{align}
where we used the well-known estimate $\sum_{n\leq N}d^2(n)\ll N\log^3N$.

From \eqref{F210}-\eqref{I211}, we get
\begin{equation}\label{F21}
    \int_U^{2U}F_{21}(x)dx\ll(q_1q_2)^2U h\log^3\frac{ \sqrt{U}}{h}.
\end{equation}

Combining \eqref{F20}, \eqref{F22} and \eqref{F21}, we obtain
\begin{equation*}
    \int_U^{2U}F_{2}^2(x)dx\ll(q_1q_2)^2U h\log^3\frac{\sqrt{U}}{h}+(q_1q_2)^2U\log^4y,
\end{equation*}
which together with \eqref{dR0}, \eqref{F1} yields
\begin{equation}\label{dR}
    \!\int_U^{2U}\!\!\!\!\!\big(R_{0}(x\!+\!h; y)\!-\!R_{0}(x; y)\big)^2\!dx\!\ll\!(q_1q_2)^2U h\log^3\frac{\sqrt{U}}{h}\!+\!(q_1q_2)^2U\log^4y.
\end{equation}

From \eqref{J}, \eqref{s/r}, and \eqref{dR}, it follows that
\begin{equation*}
    J(U,h)\ll (q_1q_2)^2U h\log^3\frac{\sqrt{U}}{h}+(q_1q_2)^2U\log^6y,
\end{equation*}
which implies
\begin{equation}\label{int2}
    \int_2\ll (q_1q_2)^2T h\log^3\frac{\sqrt{T}}{h}+(q_1q_2)^2T\mathcal{L}^6,
\end{equation}
via a splitting argument. Then Lemma \ref{lem:meanvalue} follows from \eqref{I}, \eqref{int1} and \eqref{int2}.
\end{proof}
\vspace{2ex}

\section{Proof of Theorem \ref{th:maintain}}

In this section, we will give a proof of Theorem \ref{th:maintain} by following the approach of \cite{tsang2012sign}. We still write $S(q_1q_2x)=S\big(q_1q_2x; \frac{a_1}{q_1}, \frac{a_2}{q_2}\big)$. Define
\begin{align*}
    S_+(t)=&\frac{1}{2}\big(|S(t)|+S(t)\big),&
    S_-(t)=&\frac{1}{2}\big(|S(t)|-S(t)\big).
\end{align*}
We need the following two lemmas.
\begin{lemma}\label{lem:3}
 \begin{equation*}
    \int_T^{2T}S^2_\pm(q_1q_2t)dt\gg (q_1q_2)^2T^\frac{3}{2}.
 \end{equation*}
\end{lemma}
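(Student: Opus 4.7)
The plan is to combine the first-moment bound \eqref{s1} with the second- and fourth-moment asymptotics from \eqref{sk} (specifically, $k=2$ and $k=4$), and extract the result from two applications of H\"older's inequality. The decisive observation is that \eqref{s1} gives a first moment of size at most $q_1q_2 T^{3/4}$, while a ``reverse H\"older'' argument using the second and fourth moments forces $\int_T^{2T}|S(q_1q_2 t)|\,dt\gg q_1q_2 T^{5/4}$. The factor of $T^{1/2}$ between these two quantities is precisely the gap that allows one to separate $\int S_+$ from $\int S_-$; neither Theorem~\ref{th:change} nor Lemma~\ref{lem:meanvalue} is needed.

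Concretely, restricting \eqref{sk} with $k=2$ and $k=4$ to the dyadic block $[T,2T]$ gives $\int_T^{2T} S^2(q_1q_2 t)\,dt\asymp (q_1q_2)^2 T^{3/2}$ and $\int_T^{2T} S^4(q_1q_2 t)\,dt\ll (q_1q_2)^4 T^2$. Writing $|S|^2=|S|^{2/3}\cdot|S|^{4/3}$ and applying H\"older's inequality with conjugate exponents $(3/2,3)$ yields
\[
\int_T^{2T}|S(q_1q_2 t)|\,dt \;\geq\; \frac{\bigl(\int_T^{2T} S^2(q_1q_2 t)\,dt\bigr)^{3/2}}{\bigl(\int_T^{2T} S^4(q_1q_2 t)\,dt\bigr)^{1/2}} \;\gg\; q_1q_2\, T^{5/4}.
\]
Combining this with $\bigl|\int_T^{2T} S(q_1q_2 t)\,dt\bigr|\ll q_1q_2 T^{3/4}$, which follows from \eqref{s1} by a trivial splitting, and the identity $S_\pm=(|S|\pm S)/2$, I obtain, for $T\geq(q_1q_2)^{1+\varepsilon}$ sufficiently large so that the $T^{5/4}$ term dominates,
\[
\int_T^{2T} S_\pm(q_1q_2 t)\,dt \;=\; \tfrac{1}{2}\Bigl(\int_T^{2T}|S(q_1q_2 t)|\,dt\pm\int_T^{2T} S(q_1q_2 t)\,dt\Bigr) \;\gg\; q_1q_2\, T^{5/4}.
\]

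To finish, observe that $S_\pm$ is supported on a subset of $[T,2T]$ of measure at most $T$, so Cauchy--Schwarz gives
\[
\Bigl(\int_T^{2T} S_\pm(q_1q_2 t)\,dt\Bigr)^{2} \;\leq\; T\cdot\int_T^{2T} S_\pm^2(q_1q_2 t)\,dt,
\]
and rearranging with the lower bound from the previous display yields $\int_T^{2T} S_\pm^2(q_1q_2 t)\,dt \gg (q_1q_2 T^{5/4})^2/T = (q_1q_2)^2 T^{3/2}$, which is the required bound. The proof is essentially a chain of standard inequalities, so there is no single technically hard step. The only piece of content is recognizing that H\"older run ``in reverse'' against the fourth-moment upper bound forces $\int|S|$ to sit a full factor $T^{1/2}$ above the first-moment cancellation given by \eqref{s1}, and that this separation propagates through one more Cauchy--Schwarz to the desired lower bound on $\int S_\pm^2$ matching the full second moment.
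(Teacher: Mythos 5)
Your proposal is correct and follows essentially the same route as the paper: Hölder with exponents $(3/2,3)$ applied to $|S|^2=|S|^{2/3}|S|^{4/3}$ against the $k=2$ and $k=4$ moments from \eqref{sk} to get $\int_T^{2T}|S(q_1q_2t)|\,dt\gg q_1q_2T^{5/4}$, then the first-moment cancellation \eqref{s1} to transfer this to $\int S_\pm$, and finally Cauchy--Schwarz on $[T,2T]$. No substantive differences.
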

\begin{proof}
From \eqref{sk} with $k=2,4$, by H\"{o}lder's inequality, we get
\begin{align*}
    (q_1q_2)^2T^\frac{3}{2}\ll\int_T^{2T}S^2(q_1q_2t)dt \ll&\Big(\int_T^{2T}|S(q_1q_2t)|dt\Big)^\frac{2}{3}\Big(\int_T^{2T}S^4(q_1q_2t)dt\Big)^\frac{1}{3}\\
    \ll&\Big(\int_T^{2T}|S(q_1q_2t)|dt\Big)^\frac{2}{3}(q_1q_2)^\frac{4}{3}T^\frac{2}{3},
\end{align*}
which yields
\begin{equation}\label{31}
    \int_T^{2T}|S(q_1q_2t)|dt\gg q_1q_2T^\frac{5}{4}.
\end{equation}
From \eqref{s1}, we see
\begin{equation*}
        \int_T^{2T}S(q_1q_2t)dt\ll q_1q_2T^\frac{3}{4}.
\end{equation*}
Thus, from the definition of $S_\pm(q_1q_2t)$, we have
\begin{equation*}
    \int_T^{2T}S_\pm(q_1q_2t)dt\gg q_1q_2T^\frac{5}{4}.
\end{equation*}
Then by  Cauchy-Schwarz's inequality, we get
\begin{align*}
    q_1q_2T^\frac{5}{4}\ll\Big(\int_T^{2T}dt\Big)^\frac{1}{2} \Big(\int_T^{2T}S^2_\pm(q_1q_2t)dt\Big)^\frac{1}{2}\ll T^\frac{1}{2}\Big(\int_T^{2T}S^2_\pm(q_1q_2t)dt\Big)^\frac{1}{2},
\end{align*}
which immediately implies Lemma \ref{lem:3}.
\end{proof}

\begin{lemma}\label{lem:4}
Suppose $2\leq H_0\leq\sqrt{T}$. Then
\begin{equation*}
    \int_T^{2T}\max_{h\leq H_0}\big(S_\pm(q_1q_2(t+h))-S_\pm(q_1q_2t)\big)^2 dt\ll (q_1q_2)^2H_0T\mathcal{L}^7.
\end{equation*}
\end{lemma}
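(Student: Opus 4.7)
The plan is to reduce this maximum bound to the short-interval mean square estimate of Lemma \ref{lem:meanvalue} via a dyadic chaining argument in the style of Tsang-Zhai. First, observe that the truncations $S_\pm = \tfrac{1}{2}(|S| \pm S)$ are $1$-Lipschitz in $S$, so
$$|S_\pm(q_1q_2(t+h)) - S_\pm(q_1q_2 t)| \leq |S(q_1q_2(t+h)) - S(q_1q_2 t)|.$$
Hence it suffices to prove the claimed estimate with $S$ in place of $S_\pm$ inside the max.

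For the dyadic chaining, I would choose $K \ll \mathcal{L}$ so that $H_0/2^K$ is of order $1$ and set $H_k = H_0/2^k$ for $0 \leq k \leq K$. Any $h \in [0, H_0]$ is approximated from below by the nearest multiple $h'$ of $H_K$, with $0 \leq h - h' < H_K$, and $h'$ admits a telescoping $0 = h'_0, h'_1, \ldots, h'_K = h'$ with increments $h'_j - h'_{j-1} \in \{0, H_j\}$ (obtained from the binary expansion of $h'/H_K$). Cauchy--Schwarz then yields
$$|S(q_1q_2(t+h')) - S(q_1q_2 t)|^2 \leq K \sum_{j=1}^K |S(q_1q_2(t+h'_j)) - S(q_1q_2(t+h'_{j-1}))|^2,$$
and bounding the max over $h$ by the sum over all possible starting points $a$ at each scale (the multiples of $H_j$ in $[0, H_0]$, of which there are at most $2^j$) gives
$$\max_{0 \leq h \leq H_0} |S(q_1q_2(t+h')) - S(q_1q_2 t)|^2 \leq K \sum_{j=1}^K \sum_{a} |S(q_1q_2(t+a+H_j)) - S(q_1q_2(t+a))|^2.$$

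Integrating in $t$ over $[T, 2T]$, shifting variables, and invoking Lemma \ref{lem:meanvalue} with $h$ replaced by $H_j \geq 1$, each inner integral is bounded by $(q_1q_2)^2 H_j T \mathcal{L}^3 + (q_1q_2)^2 T \mathcal{L}^6$. Summing over the $\leq 2^j$ starting points $a$ produces $(q_1q_2)^2 H_0 T \mathcal{L}^3 + 2^j (q_1q_2)^2 T \mathcal{L}^6$ per scale; summing over $j \leq K$ gives $(q_1q_2)^2 H_0 T \mathcal{L}^4 + (q_1q_2)^2 H_0 T \mathcal{L}^6$, using $\sum_{j \leq K} 2^j \ll 2^K \asymp H_0$; and the outer Cauchy--Schwarz factor $K \ll \mathcal{L}$ produces the claimed bound $(q_1q_2)^2 H_0 T \mathcal{L}^7$. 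The residual $|S(q_1q_2(t+h)) - S(q_1q_2(t+h'))|$ coming from $h - h' < H_K$ is absorbed into the $j = K$ term, using that individual jumps of $S(q_1q_2 \cdot)$ are $O(T^\varepsilon)$ and so contribute at most $O(T^{1+\varepsilon})$ after integration.

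The main technical point is the dyadic chaining itself---ensuring that the Cauchy--Schwarz losses stack only logarithmically and that the max-over-$h$ can be relaxed to a sum-over-starting-points at each scale---together with the slightly delicate handling of the sub-unit residual, where Lemma \ref{lem:meanvalue} does not directly apply. Once Lemma \ref{lem:meanvalue} is in hand, all remaining steps are essentially bookkeeping, and the final exponent $\mathcal{L}^7$ matches the target precisely because Lemma \ref{lem:meanvalue}'s ``error'' term carries $\mathcal{L}^6$ and the chaining adds one more logarithmic factor.
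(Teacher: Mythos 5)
Your argument is essentially identical to the paper's: the same Lipschitz reduction from $S_\pm$ to $S$, the same dyadic chaining of $h$ with a Cauchy--Schwarz loss of order $\mathcal{L}$, the same sum over $\leq 2^{j}$ shifts at each scale $j$, the same application of Lemma \ref{lem:meanvalue} to each shifted integral, and the same bookkeeping of the logarithms yielding $\mathcal{L}^{7}$. The one quantitative slip is in the sub-unit residual: bounding its contribution by $O(T^{1+\varepsilon})$ does not beat the target $(q_1q_2)^2H_0T\mathcal{L}^7$ when $H_0$ is bounded (the lemma only assumes $H_0\geq 2$), but replacing the crude $T^{\varepsilon}$ jump bound with the short-interval divisor estimate $\sum_{x<n\leq x+y}d(n)\ll y\log x$ --- exactly as the paper does --- gives a residual contribution of $(q_1q_2)^2T\mathcal{L}^2$, which suffices.
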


\begin{proof}
Since
\begin{equation*}
    |S_\pm(q_1q_2(t+h))-S_\pm(q_1q_2t)|\leq |S(q_1q_2(t+h))-S(q_1q_2t)|,
\end{equation*}
it is sufficient to prove that
\begin{equation*}
   I= \int_T^{2T}\max_{h\leq H_0}\big(S(q_1q_2(t+h))-S(q_1q_2t)\big)^2 dt\ll (q_1q_2)^2H_0T\mathcal{L}^7.
\end{equation*}

Write $H_0=2^\lambda b$, such that $\lambda\in \mathbb{N}$ and $1\leq b <2$. By using Lemma \ref{lem:meanvalue}, we get
\begin{align*}
    I\!\ll& \lambda\sum_{\mu\leq\lambda}\sum_{0\leq\nu\leq2^\mu} \int_{T+\nu2^{\lambda-\mu}b}^{2T+\nu2^{\lambda-\mu}b} \big(S(q_1q_2(t+2^{\lambda-\mu}b))-S(q_1q_2t)\big)^2dt\!+\!(q_1q_2)^2T\mathcal{L}^2\\
    \ll& \lambda\sum_{\mu\leq\lambda}\sum_{0\leq\nu\leq2^\mu} \big((q_1q_2)^22^{\lambda-\mu}bT\mathcal{L}^3+(q_1q_2)^2T\mathcal{L}^6\big)\\
    \ll& \lambda\sum_{\mu\leq\lambda}\big((q_1q_2)^22^{\lambda}bT\mathcal{L}^3+(q_1q_2)^22^\mu T\mathcal{L}^6\big)\\
    \ll& \lambda^2(q_1q_2)^2H_0T\mathcal{L}^3+\lambda(q_1q_2)^2H_0T\mathcal{L}^6\\
    \ll& (q_1q_2)^2H_0T\mathcal{L}^7,
\end{align*}
where we used the well-known estimate
\begin{equation*}
    \sum_{x<n\leq x+y}d(n)\ll y\log x,\quad x^\varepsilon<y<x.
\end{equation*}
\end{proof}
Now we finish the proof of Theorem \ref{th:maintain}. For any function $P(t)$ and $Q(t)$ such that
\begin{align*}
    \omega(t)=P^2(t)-4\max_{h\leq H_0}\big(P(t+h)-P(t)\big)^2-Q^2(t)>0,
\end{align*}
we see that $P(t+h)$  has the same sign as $P(t)$, and $|P(t+h)|>\frac{1}{2}|Q(t)|$ for any $0\leq h\leq H_0$. Take $P(t)=S_\pm(q_1q_2t)$ and $Q(t)=\delta q_1q_2t^\frac{1}{4}$ for a  sufficiently small $\delta>0$. By Lemma \ref{lem:3} and Lemma \ref{lem:4}, we get
\begin{align}\label{35}
    \int_T^{2T}\!\!\!\!\!\omega(t)dt\!\gg\! (q_1q_2)^2T^\frac{3}{2}\!-\!O\Big((q_1q_2)^2\big(H_0T\mathcal{L}^7+\delta^2T^\frac{3}{2}\big)\Big)\!\gg\! (q_1q_2)^2T^\frac{3}{2},
\end{align}
by taking $H_0=\delta T^\frac{1}{2}\mathcal{L}^{-7}$. Let $$\mathscr{S}=\{t\in[T,2T]:\omega(t)>0\}.$$ From \eqref{sk} and \eqref{35}, using Cauchy-Schwarz's inequality, we have
\begin{align*}
    (q_1q_2)^2T^\frac{3}{2}\ll & \int_T^{2T}\omega(t)dt\leq \int_\mathscr{S}\omega(t)dt \leq \int_\mathscr{S}S^2_\pm(q_1q_2t)dt \\ \leq&|\mathscr{S}|^\frac{1}{2}\Big(\int_T^{2T}S^4(q_1q_2t)dt\Big)^\frac{1}{2} \ll |\mathscr{S}|^\frac{1}{2}(q_1q_2)^2T,
\end{align*}
which implies
\begin{equation*}
    |\mathscr{S}|\gg T.
\end{equation*}
Thus the proof of Theorem \ref{th:maintain} is completed.
\qed

\section{Proof of Theorem \ref{th:omega}}

Suppose $k\geq3$ is a fixed odd integer and $T\geq (q_1q_2)^{\varepsilon}$ is a large parameter.
Set
\begin{equation*}
    \delta=\left\{\begin{array}{ll}-1,&\quad\text{if } C_k\geq0,\\1,&\quad\text{if } C_k<0,\end{array}\right.
\end{equation*}
where $C_k$ is defined in \eqref{sk}.

By Theorem \ref{th:maintain}, there exists $t\in[T,2T]$ such that $\delta S\big(q_1q_2u; \frac{a_1}{q_1}, \frac{a_2}{q_2}\big)> c_5 q_1q_2t^\frac{1}{4}$ for any $u\in[t,t+H_0]$, with $H_0=c_4\sqrt{T}\mathcal{L}^{-7}$. Thus
\begin{align*}
    &c^k_5 H_0 t^\frac{k}{4}<(q_1q_2)^{-k}\int_t^{t+H_0}\delta^kS^k\big(q_1q_2u; \frac{a_1}{q_1}, \frac{a_2}{q_2}\big)du\\
    =&\delta^kC_k\big((t\!+\!H_0)^{1+\frac{k}{4}}\!-\!t^{1+\frac{k}{4}}\big)\!+\delta^k\Big( \mathcal{F}_k\big(q_1q_2(t\!+\!H_0); \frac{a_1}{q_1}, \frac{a_2}{q_2}\big)\!-\! \mathcal{F}_k\big(q_1q_2t; \frac{a_1}{q_1}, \frac{a_2}{q_2}\big)\Big)\\
    =&\delta^kC_k(1\!\!+\!\frac{k}{4})t^{\frac{k}{4}}H_0\!\!+\!O(H_0^2t^{\frac{k}{4}-1})\!+\!\delta^k\big( \!\mathcal{F}_k(q_1q_2(t\!\!+\!\!H_0); \frac{a_1}{q_1}, \frac{a_2}{q_2})\!\!-\!\!\mathcal{F}_k(q_1q_2t; \frac{a_1}{q_1}, \frac{a_2}{q_2})\big),
\end{align*}
which yields
\begin{equation*}
    \delta^k\Big( \!\mathcal{F}_k\big(q_1q_2(t\!+\!H_0); \frac{a_1}{q_1}, \frac{a_2}{q_2}\big)\!\!- \! \mathcal{F}_k\big(q_1q_2t; \frac{a_1}{q_1}, \frac{a_2}{q_2}\big)\Big)>C_k^*H_0t^{\frac{k}{4}}\big(1+O(H_0T^{-1})\big),
\end{equation*}
with
\begin{equation*}
    C_k^*=c^k_5-\delta^kC_k\big(1\!+\!\frac{k}{4}\big)>0.
\end{equation*}
Thus we get
\begin{equation*}
    \Big|\mathcal{F}_k\big(q_1q_2(t\!+\!H_0); \frac{a_1}{q_1}, \frac{a_2}{q_2}\big)\!\!- \! \mathcal{F}_k\big(q_1q_2t; \frac{a_1}{q_1}, \frac{a_2}{q_2}\big)\Big|\gg H_0T^{\frac{k}{4}},
\end{equation*}
which immediatly implies Theorem \ref{th:omega}.
\qed

\vspace{5ex}

\end{document}